\newtheorem{thm}{Theorem}
\newtheorem{cor}[thm]{Corollary}
\newtheorem{defi}[thm]{Definition}
\newtheorem{rem}[thm]{Remark}
\newtheorem{nota}[thm]{Notation}
\newtheorem{princ}[thm]{Principle}
\newtheorem{ack}[thm]{Acknowledgement}
\newtheorem*{tempo*}{Template}
\newcommand\be{\begin{equation}}
\newcommand\ee{\end{equation}} 
\def\bdefi{\begin{defi}\rm}
\def\edefi{\end{defi}}
\def\bnota{\begin{nota}\rm}
\def\enota{\end{nota}}
\def\FIVE{\Pi_{1}^{1}\text{-\textup{\textsf{CA}}}_{0}}
\def\SIX{\Pi_{2}^{1}\text{-\textsf{\textup{CA}}}_{0}}
\def\SIXK{\Pi_{k}^{1}\text{-\textsf{\textup{CA}}}_{0}^{\omega}}
\def\ATR{\textup{\textsf{ATR}}}
\def\Z{\textup{\textsf{Z}}}
\def\ZFC{\textup{\textsf{ZFC}}}
\def\TIET{\textup{\textsf{TIE}}}
\def\TIE{\textup{\textsf{TIE}}}
 \def\r{\mathbb{r}}
\def\u{\textup{\textsf{u}}}
\def\RCA{\textup{\textsf{RCA}}}
\def\({\textup{(}}
\def\){\textup{)}}
\def\RCAo{\textup{\textsf{RCA}}_{0}^{\omega}}
\def\ACAo{\textup{\textsf{ACA}}_{0}^{\omega}}
\def\WKL{\textup{\textsf{WKL}}}
\def\WWKL{\textup{\textsf{WWKL}}}
\def\bye{\end{document}}
\def\N{{\mathbb  N}}
\def\Q{{\mathbb  Q}}
\def\R{{\mathbb  R}}
\def\L{\textsf{\textup{L}}}
\def\MUC{\textup{\textsf{MUC}}}
\def\di{\rightarrow}
\def\asa{\leftrightarrow}
\def\ACA{\textup{\textsf{ACA}}}
\def\QFAC{\textup{\textsf{QF-AC}}}
\def\HBU{\textup{\textsf{HBU}}}
\def\PRA{\textup{\textsf{PRA}}}
\def\cocode{\textup{\textsf{cocode}}}
\def\CT{\textup{\textsf{CT}}}
\def\BW{\textup{\textsf{BW}}}
\def\RCP{\textup{\textsf{RCP}}}
\def\HEI{\textup{\textsf{HEI}}}
\def\WEI{\textup{\textsf{WEI}}}
\def\tot{\textup{\textsf{tot}}}
\def\cont{\textup{\textsf{cont}}}
\def\FVP{\textup{\textsf{FVP}}}
\def\SS{\textup{\textsf{S}}}
\def\WEI{\textup{\textsf{WEI}}}
\def\LIN{\textup{\textsf{LIN}}}
\def\WHBU{\textup{\textsf{WHBU}}}
\def\ORD{\textup{\textsf{ORD}}}
\def\UATR{\textup{\textsf{UATR}}}
\def\FF{\textup{\textsf{FF}}}
\def\ECF{\textup{\textsf{ECF}}}
\newcommand{\T}{\mathcal{T}}
\numberwithin{equation}{section}
\numberwithin{thm}{section}
\begin{document}
\title{Representations and the Foundations of Mathematics}
\author{Sam Sanders}
\address{Department of Philosophy II, RUB Bochum, Germany}
\email{sasander@me.com}
\begin{abstract}
The representation of mathematical objects in terms of (more) basic ones is part and parcel of (the foundations of) mathematics.  
In the usual foundations of mathematics, i.e.\ $\ZFC$ set theory, all mathematical objects are represented by sets, while ordinary, i.e.\ non-set theoretic, mathematics 
is represented in the more parsimonious language of second-order arithmetic.  This paper deals with the latter representation for the rather basic 
case of continuous functions on the reals and Baire space.  We show that the logical strength of basic theorems named after Tietze, Heine, and Weierstrass, changes significantly upon the 
replacement of `second-order representations' by `third-order functions'.  
We discuss the implications and connections to the \emph{Reverse Mathematics} program and its foundational claims regarding predicativist mathematics and Hilbert's program for the foundations of mathematics. 
Finally, we identify the problem caused by representations of continuous functions and formulate a criterion to avoid problematic codings within the bigger picture of representations. 
\end{abstract}


\maketitle
\thispagestyle{empty}
\section{Introduction}\label{intro}
Lest we be misunderstood, let our first order of business be to formulate the following blanket caveat:  
\begin{center}
\emph{any formalisation of mathematics generally involves some kind of representation \(aka coding\) of mathematical objects in terms of others}.  
\end{center}
Now,  the goal of this paper is to critically examine the role of representations based on the language of second-order arithmetic; such an examination perhaps unsurprisingly involves the comparison of theorems based on second-order representations versus theorems formulated in third-order arithmetic.  
To be absolutely clear, we do not claim that the latter represent the ultimate mathematical truth, nor do we (wish to) downplay the role of representations in third-order arithmetic.  

\smallskip

As to content, we briefly introduce representations based on second-order arithmetic in Section~\ref{stateofart}, while our 
main results are sketched in Section \ref{maintro}.

\subsection{Representations and second-order arithmetic}\label{stateofart}
The representation of mathematical objects in terms of (more) basic ones is part and parcel of (the foundations of) mathematics.  
For instance, in the usual foundation of mathematics, $\ZFC$ set theory, the natural numbers $\N=\{0,1, 2, \dots\}$ can be represented via sets
by the \emph{von Neumann ordinals} as follows: $0$ corresponds to the empty set $\emptyset$, and $n+1$ corresponds to $n\cup \{n\}$.
The previous is more than just a simple technical device: critique of this sort of representation has given rise to various structuralist and nominalist foundational philosophies \cite{bijnaserafijn}.  

\smallskip

A more conceptual example is the vastly more parsimonious (compared to set theory) language $\L_{2}$ of \emph{second-order arithmetic}, essentially consisting of natural numbers and sets thereof.  
Nonetheless, this language is claimed to allow for the formalisation of \emph{ordinary}\footnote{Simpson describes \emph{ordinary mathematics} in \cite{simpson2}*{I.1} as \emph{that body of mathematics that is prior to or independent of the introduction of abstract set theoretic concepts}.} or core mathematics by e.g.\ Simpson as follows.  
\begin{quote}
The language $\L_{2}$ comes to mind because it is just adequate to define the majority of ordinary
mathematical concepts and to express the bulk of ordinary mathematical
reasoning. \cite{simpson2}*{I.12}
\end{quote}
\begin{quote}
We focus on the language of second-order arithmetic, because that language is the
weakest one that is rich enough to express and develop the bulk of core mathematics. \cite{simpson2}*{Preface}
\end{quote}
Simspon's claims can be substantiated via a number of representations of uncountable objects as second-order objects, including (continuous) functions on $\R$ \cite{simpson2}*{II.6}, topologies \cite{mummy}, metric spaces \cite{simpson2}*{I.8.2}, measures \cite{simpson2}*{X.1},  et cetera.  One usually refers to these second-order representations as `codes' and the associated practise of using codes as `coding'.  The aim of this paper is a critical investigation of this coding practise.    
 
\smallskip

To be absolutely clear, we do not judge the correctness of Simpson's above grand claims in this paper, nor will we make any general judgement as to whether coding is good or bad \emph{per se}.   
What we shall show is much more specific, namely that introducing codes \emph{for continuous functions} in certain theorems of core or ordinary mathematics \emph{changes the minimal axioms required to prove these theorems}.  The knowledgable reader of course recognises the final italicised sentence as follows.
\begin{quote} 
We therefore formulate our \emph{Main Question} as follows: \emph{Which set existence axioms are needed to prove the theorems of ordinary, non-set-theoretic mathematics?} 
\cite{simpson2}*{I.1}; emphasis original.
\end{quote}  
Indeed, the previous question is the central motivation of the \emph{Reverse Mathematics} program (RM hereafter), founded by Friedman in \cite{fried, fried2}.
An overview of RM may be found in \cites{simpson1,simpson2}, while an introduction for the `scientist in the street' is \cite{stillebron}.  
We provide a brief overview of Reverse Mathematics, including Kohlenbach's higher-order variety, in Section \ref{janarn}. 

\smallskip

Thus, if one is interested in the Main Question of RM, it seems \emph{imperative} that the aforementioned coding practise does not change the minimal axioms needed to prove a given theorem. 
This becomes even more pertinent in light of Simpson's strong statements regarding the use of `extra data' in constructive mathematics.
\begin{quote}
The typical constructivist response to a nonconstructive mathemati-
cal theorem is to modify the theorem by adding hypotheses or “extra
data”.  In contrast, our approach in this book is to analyze the provability of mathematical theorems as they stand, passing to stronger
subsystems of $\Z_{2}$ if necessary. \cite{simpson2}*{I.8}
\end{quote}
\begin{quote}
This situation has prompted some authors, for example
Bishop/Bridges [20, page 38], to build a modulus of uniform continuity into their definitions of continuous function.
Such a procedure may be appropriate for Bishop since his goal is to replace ordinary mathematical theorems by
their ``constructive” counterparts.
However, as explained in chapter I, our goal is quite different [from Bishop's Constructive Analysis].
Namely, we seek to draw out the set existence assumptions which are implicit in the ordinary mathematical theorems
\emph{as they stand}. \cite{simpson2}*{IV.2.8}; emphasis original.
\end{quote}
Hence, if one takes seriously the claim that RM studies theorems of mathematics `as they stand', it is of the utmost important that the latter standing is not changed by the coding practise of RM.  
Indeed, Kohlenbach has shown in \cite{kohlenbach4}*{\S4} that the existence of a code for a continuous function is the same as the latter having a modulus of continuity, a slight constructive enrichment going against the above.

\smallskip
   
Moreover, coding is not a tangential topic in RM in light of the following:
the two pioneers of RM, Friedman and Simpson, actually devote a section titled `The Coding Issue' to the coding practise in RM in their  `issues and problems in RM' paper \cite{fried5}, from which we consider the following.
\begin{quote}
Most mathematics naturally lies within the realm of complete separable metric
spaces and continuous functions between them defined on open, closed, compact or
$G_{\delta}$ subsets. This is the central coding issue.
\begin{quote}
PROBLEM. Continuation of the previous problem: Show that 
Simpson's neighborhood condition coding of partial continuous functions
between complete separable metric spaces is "optimal". (It amounts
to a coding of continuous functions on a $G_{\delta}$.) 
\end{quote}
We emphasize our view that the handling of the critical coding in $\RCA_{0}$ in [Simpson's monograph \cite{simpson2}]
is canonical, but we are asking for theorems supporting this view.  \cite{fried5}*{p.\ 135}
\end{quote}
Moreover, in his `open problems in RM' paper, Montalb\'an discusses Friedman's \emph{strict RM} program \cite{friedlc06} as follows. 
\begin{quote}
Friedman has proposed the development what he calls strict reverse mathematics (SRM). The objective of SRM is to eliminate the following two possible criticisms of reverse mathematics: that we need to code objects in cumbersome ways (something that is not part of classical mathematics), [\dots]. 

\smallskip

As for the coding issue, Friedman says that, for each area X of mathematics, there will be a SRM for X. The basic concepts of X will be taken as primitives, avoiding the need for coding. This would also allow consideration of uncountable structures, thereby getting around this limitation of reverse mathematics. \cite{montahue}*{p.\ 449}
\end{quote}
The previous quotes suggest that on one hand the coding practise of RM is definitely on the radar in RM, while on the other hand there is an optimistic belief that coding does not cause any major problems in RM.  
This brings us to the next section in which we discuss the main results of this paper.  

\subsection{Main results}\label{maintro}
The conclusion from the previous section was that on one hand the coding practise of RM is definitely on the radar in RM, while on the other hand there is an optimistic belief that coding does not cause any major problems in RM.  
Our main goal is to dispel this belief in that we identify basic theorems about continuous functions, including basic ones named after Heine, Tietze, and Weierstrass, where the (only) change from `continuous third-order function' to `second-order code for continuous function' changes the strength of the theorem at hand, namely as in Figure \ref{kk} below.  
Since we want to consider objects from third-order arithmetic, we shall work in Kohlenbach's higher-order RM introduced in \cite{kohlenbach2} and discussed in Section \ref{prelim1}.  
The associated base theory is $\RCAo$, which proves the same $\L_{2}$-sentence as $\RCA_{0}$, up to insignificant logical details, by Remark \ref{ECF}. 

\smallskip

Figure \ref{kk} below provides an overview of our results to be proved in Section \ref{tietenzien}-\ref{ekesect}. The first column lists a theorem, while the second (resp.\ third) column lists the theorem's logical strength for the formulation involving second-order codes (resp.\ third order functions).     
The reader should (only) view these two formulations as two possible formalisations of the same theorem, based on respectively second-order and third-order arithmetic.   
In particular, we discuss the question whether third-order objects are a more natural form of representation than second-order objects in Section \ref{REF}.
We wish to stress that it would perhaps not come as a surprise that the coding of topologies or measurable sets in second-order arithmetic has its problems \cites{hunterphd, dagsamVI}, but the theorems in Figure \ref{kk} are \emph{quite} elementary in comparison. 

\smallskip

 Note that we use the usual RM-definition of (separably) closed sets from \cite{simpson2}*{II.4} and \cite{browner, brownphd} everywhere, i.e.\ the only difference between the second and third column in Figure \ref{kk} is the change from `(continuous) third-order function' to `second-order code'.  
 We also note that the final rows are formulated over Cantor and Baire space, i.e.\ the coding of the reals is not relevant here, and we could obtain versions of the other theorems for the latter spaces. 
 Finally, the first instance of Ekeland's variational principle involves \emph{honest}\footnote{Honest codes are a (rather) technical device from \cite{ekelhaft}*{Def.\ 5.1}. \label{core}} codes.  
 
 \begin{figure}[h]
\begin{tabular}{c|c|c}
 & RM-codes ($\L_{2}$)  & Third-order functions  \\
\hline
  Heine's theorem for  &equivalent  & provable    \\
  separably closed sets & to $\ACA_{0}$ & in $\RCAo+\WKL_{0}$\\
  \hline
    Weierstrass' theorem for  &equivalent  & provable    \\
  separably closed sets & to $\ACA_{0}$ & in $\RCAo+\WKL_{0}$\\
  \hline
 Tietze's theorem for &  equivalent & provable  \\
 separably closed sets & to $\ACA_{0}$ & in $\RCAo$\\
  \hline
One-point-extension &  equivalent & provable  \\
 theorem (Theorems \ref{KAYO} and \ref{KAYO2}) & to $\ACA_{0}$ & in $\RCAo$\\
 \hline
 Ekeland's variational & equivalent & provable in a \\
 principle$^{\ref{core}}$ on $2^{\N}$  & to $\ACA_{0}$  & conservative extension \\
 && of $\RCAo+\WKL$\\
 \hline
 Ekeland's variational & equivalent & provable in a \\
 principle on $\N^{\N}$  & to $\FIVE$ & conservative extension \\
 && of $\ACA_{0}$ 
\end{tabular}
\caption{Summary of our results}
\label{kk}
\end{figure}\vspace{-0.4cm}~\\
We caution the reader not to over-interpret the above: for instance, we do not claim that the third column of Figure \ref{kk} provides the ultimate (RM) analysis of the theorems in the first column.
What Figure \ref{kk} does establish is that coding can change the logical strength of the most basic theorems.  Hence, coding as done in RM is neither good nor bad \emph{in general}, but \emph{in the specific case} of the Main Question of RM, coding seems problematic \emph{lest the wrong minimal axioms be identified}.  

\smallskip

Indeed, the foundational claims made in relation to RM are (of course) based on the correct identification of the minimal axioms need to prove a given theorem.  
We are thinking of the development of Russell-Weyl-Feferman \emph{predicative mathematics} or Simpson's partial realisation of \emph{Hilbert's program for the foundations of mathematics}. 
In Section \ref{FRM}, we introduce the aforementioned programs and critically investigate the implications of Figure \ref{kk} for these claims and programs 

\smallskip

Now, the above is only a stepping stone: we wish to understand \emph{why} the theorems in Figure \ref{kk} behave as they do.  
Moreover, this understanding should lead to a distinction between `good' and `bad' codes, as e.g.\ the coding of real numbers does not seem to lead to results as in Figure \ref{kk}.  
We discuss these matters in some detail in Section \ref{XMX}.  
In a nutshell, the cause of the results in Figure \ref{kk} is that there are partial codes that do not correspond to a third-order object in a weak system.  
Digging more deeply, we shall observe that second- and third-order objects are intimately connected, also in non-classical extensions of the base theory, but the same is not true for (second-order) \emph{representations} of third-order objects.  
The absence of this connection is what makes a code `bad' in the sense of yielding results as in Figure~\ref{kk}.

\smallskip

Finally, to establish the results in Figure \ref{kk} we require the `excluded middle trick' introduced in \cite{dagsamV}.  
Since all systems are based on classical logic, one can always invoke the following disjunction.
\begin{quote}
Either there exists a discontinuous function on $\R$ (in which case we have higher-order $\ACA_{0}$) or all function on $\R$ are continuous.  
\end{quote}
This is the crucial step for the below proofs, as will become clear. 

\smallskip

In conclusion, the aim of this paper is to establish the results in Figure \ref{kk} and discuss the associated foundational implications.  
At the risk of repeating ourselves, Figure \ref{kk} is meant to showcase the difference between the second-order and third-order formalisations of the theorems in the first column. 
To be absolutely clear, the third-order formalisation can also be said to involve a kind of representation/coding, i.e.\ the former does not constitute the theorem \emph{an sich}. We should not have to point out that the latter point of view is captured by the centred statement in Section \ref{intro}.

\section{Tietze extension theorem}\label{tietenzien}
We establish the results regarding the Tietze extension theorem as in Figure \ref{kk}.
Hereafter, we assume familiarity with the basic notions of RM, Kohlenbach's base theory $\RCAo$ in particular.  
For the reader's convenience, an introduction to RM, and the definition of $\RCAo$, can be found in Section \ref{janarn}.
We like to point out that our results do not call into question the \emph{Big Five} phenomenon, as will become clear. 
\subsection{Introduction}\label{trintro}
We discuss some relevant facts for Tietze's theorem.  

\smallskip

First of all, the Tietze extension theorem has been studied in RM in e.g.\ \cite{simpson2, brownphd, withgusto, paultiet}.  
In the latter references, it is shown that there are versions of Tietze's extension theorem provable in $\RCA_{0}$, equivalent to $\WKL_{0}$, and equivalent to $\ACA_{0}$.  
These different versions make use of different notions of continuity and closed set.  
The main result of this section is that a slight change to the Tietze's extension theorem, namely replacing second-order codes by third-order functions, makes the version at the level of $\ACA_{0}$ provable in $\RCAo$.  

\smallskip

Secondly, the aforementioned `slight change' definitely has historical antecedent, in that it can be found in Tietze's paper \cite{tietze}, and is actually the dominating formalism therein.   
To see this, let us discuss the exact formulation of Tietze's extension theorem, which expresses that \emph{for certain spaces $X$, if a function $f$ is continuous on a closed $C\subset X$, there is a function $g$, continuous on all of $X$, such that $f=g$ on $C$}.
%
There are at least two ways of formulating the antecedent of the previous theorem, say for $\R\di \R$-functions, namely as follows.  
\begin{enumerate}
\item[(A)] The function $f$ is defined and continuous on $C$; undefined on $\R\setminus C$. 
\item[(B)] The function $f$ is defined everywhere on $\R$ and continuous on $C$.  
\end{enumerate}
Tietze proves three theorems in \cite{tietze}, called \emph{Satz 1, 2, \textup{and} 3}.  The first two are formulated using (B), while the third one is formulated as a corollary to the first and second theorem, and is formulated using (A).
Tietze explicity mentions that $f$ can be discontinuous outside of $C$ in \cite{tietze}*{p.\ 10}.
When treating Tietze's extension theorem, Carath\'eodory uses both (A) and (B) in \cite{carapils}*{\S\S541-543}, while Hausdorff states that (B) is used in \cite{hausen}.

\smallskip

It goes without saying that item (A) corresponds rather well to the RM-definition of `continuous function on a closed set', while (B) is essentially the higher-order definition since all objects are total (= everywhere defined) in Kohlenbach's RM.
Moreover, we show below that the `actual' strength of the second-order Tietze extension theorem comes from the fact that it can extend certain (partial) codes into (total) higher-order functions (working in $\RCAo$).

\smallskip

In conclusion, given the previous observation regarding items (A) and (B), it seems reasonable to study a version of Tietze's extension theorem based on (B) in higher-order RM, which can be found in Section \ref{sepclo}.  
Hereonafter, `continuous' refers to the usual `$\epsilon$-$\delta$' definition of third-order functions, while `RM-continuous' refers to the coding used in RM \cite{simpson2}*{II.6.1}.


\subsection{Separably closed sets}\label{sepclo}
We establish the results sketched in Figure \ref{kk}, i.e.\ we study Tietze's extension theorem for \emph{separably closed} sets.  

\smallskip

First of all, the aforementioned theorem can be be found in \cite{brownphd, withgusto} and we note that `separably closed sets' are closed sets represented by a countable dense sub-set (see e.g.\ \cite{brownphd,browner} for details).   
\bdefi[$\RCA_{0}$; separably closed set]\label{kink}
A sequence $S=(x_{n})_{n\in \N}$ is a (code for a) \emph{separably closed set} $\overline{S}$ in a complete separable metric space $\hat{A}$.  
We say that $x\in \hat{A}$ \emph{belongs to} $\overline{S}$, denoted `$x\in \overline{S}$' if $(\forall k\in \N)(\exists n\in \N)(d(x, x_{n})<\frac{1}{2^{k}} )$. 
\edefi
The second-order version of Tietze's theorem is then as follows. 
\begin{princ}[$\TIET^{2}$]
For $f:C\di \R$ {RM-continuous} on the {separably closed} $C\subseteq [0,1]$, there is {RM-continuous} $g:[0,1]\di \R$ such that $f(x)=_{\R}g(x)$ for $x\in C$.
\end{princ}
As suggested above, $\TIET^{2}\asa \ACA_{0}$ over $\RCA_{0}$ by \cite{withgusto}*{Theorem~6.9}.  
The exact choice of domain does not seem to matter much by \cite{brownphd}*{Theorem~1.35}.

\smallskip
 
Secondly, the higher-order version version of Tietze's theorem based on item (B) from Section \ref{trintro} is as follows. 
The reader will agree that the only change from $\TIET^{2}$ to $\TIET^{\omega}$ is that we replaced `second-order code' by `third-order function'.
\begin{princ}[$\TIET^{\omega}$]
For $f:[0,1]\di \R$ {continuous} on the {separably closed} $C\subseteq[0,1]$, there is {continuous} $g:[0,1]\di \R$ such that $f(x)=_{\R}g(x)$ for $x\in C$.
\end{princ}
To be absolutely clear, recall we use `continuous' in the sense of the usual `$\epsilon$-$\delta$' definition, while `RM-continuous' refers to the coding used in RM \cite{simpson2}*{II.6.1}.
In contrast to $ \TIET^{2}$ requiring arithmetical comprehension, the higher-order version $\TIET^{\omega}$ is much weaker. 
\begin{thm}\label{WLS}
The system $\RCAo$ proves $\TIET^{\omega}$.
\end{thm}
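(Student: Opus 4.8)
The plan is to run the `excluded middle trick' from Section~\ref{maintro}, splitting on whether Kleene's quantifier $(\exists^{2})$ exists. Since all our systems are classical, $\RCAo$ proves the disjunction $(\exists^{2})\vee\neg(\exists^{2})$, and it suffices to establish $\TIET^{\omega}$ under each disjunct. In the case $\neg(\exists^{2})$, I would invoke the standard fact, provable in $\RCAo$, that the non-existence of $(\exists^{2})$ is equivalent to the statement that every function $\R\to\R$ is continuous (as $(\exists^{2})$ is equivalent over $\RCAo$ to the existence of a discontinuous function on $\R$). Under this hypothesis the given $f\colon[0,1]\to\R$, which is only \emph{assumed} continuous on $C$, is in fact continuous on all of $[0,1]$; hence $g:=f$ witnesses $\TIET^{\omega}$ trivially. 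This is exactly the situation in which formalisation (B) collapses to totality plus global continuity and the extension problem evaporates.

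The substantial case is $(\exists^{2})$, where $\RCAo+(\exists^{2})$ amounts to $\ACAo$ and in particular proves $\ACA_{0}$, whence the second-order Tietze theorem $\TIET^{2}$ by the cited equivalence $\TIET^{2}\leftrightarrow\ACA_{0}$. The idea is to reduce the third-order statement to the second-order one by translating back and forth between $\epsilon$-$\delta$-continuous third-order functions and RM-codes, using $(\exists^{2})$ to decide the quantifier-over-reals conditions involved. Concretely: (i) from the third-order $f$, continuous on the separably closed $C=\overline{S}$, build an RM-code $\Phi$ for $f\restriction C$ by enumerating rational neighbourhood conditions $(a,r,b,s)$ and retaining those for which $(\forall x\in C\cap B(a,r))(f(x)\in B(b,s))$ holds, a condition of the form $(\forall x\in\N^{\N})(\dots)$ with arithmetical-in-$f$ matrix and hence decidable by $(\exists^{2})$; (ii) apply $\TIET^{2}$ to $\Phi$ to obtain an RM-code $\Psi$ for a continuous extension on $[0,1]$; (iii) convert $\Psi$ back into a genuine third-order $g\colon[0,1]\to\R$ by evaluating the code, where $(\exists^{2})$ again performs the unbounded search through the neighbourhood conditions of $\Psi$ and compactness of $[0,1]$ guarantees totality. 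Chasing the definitions, $g$ is $\epsilon$-$\delta$-continuous and satisfies $g(x)=_{\R}f(x)$ for $x\in C$, as required.

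The main obstacle I anticipate lies in steps (i)--(iii): making precise and verifying, inside $\RCAo+(\exists^{2})$, the two-way dictionary between third-order continuous functions and RM-codes, in particular that (i) produces a bona fide code faithful to $f$ on $C$ and that (iii) yields a \emph{total} function rather than a code that is undefined somewhere. This is precisely the `intimate connection' between second- and third-order objects in non-classical extensions of the base theory alluded to in Section~\ref{maintro}; none of it is available in plain $\RCAo$, which is exactly why the excluded-middle split is needed to supply either $(\exists^{2})$ (for the coding dictionary) or its negation (for the trivial case).

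A cleaner but essentially equivalent alternative for the $(\exists^{2})$-case would bypass $\TIET^{2}$ and perform the classical construction directly: use $\ACA_{0}$ to expand the open set $[0,1]\setminus C$ into its component intervals and define $g$ by linear interpolation between the boundary values of $f$, which is legitimate since $f$ is continuous on $C$. Either route closes the $(\exists^{2})$-case, and together with the trivial $\neg(\exists^{2})$-case this yields $\RCAo\vdash\TIET^{\omega}$.
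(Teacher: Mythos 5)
Your overall architecture coincides with the paper's proof: the excluded-middle split on $(\exists^{2})\vee\neg(\exists^{2})$; the case $\neg(\exists^{2})$ settled by $g:=f$ since all $\R\di\R$-functions are then continuous; and the case $(\exists^{2})$ settled by passing to an RM-code for $f$ on $C$, applying $\TIET^{2}$, and converting the resulting code back into a third-order function (the paper does this last conversion with $\QFAC^{1,0}$ applied to the totality of the code; your $(\exists^{2})$-search is an acceptable variant). However, your step (i) contains a genuine error. You justify the decidability of the neighbourhood condition $(\forall x\in C\cap B(a,r))(f(x)\in B(b,s))$ by saying it is ``of the form $(\forall x\in\N^{\N})(\dots)$ with arithmetical-in-$f$ matrix and hence decidable by $(\exists^{2})$''. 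Formulas of that form are precisely $\Pi_{1}^{1}$ (relative to $f$), and $(\exists^{2})$ cannot decide those: deciding $\Pi_{1}^{1}$-formulas is exactly the job of the Suslin functional $\SS^{2}$, so your argument as written would need strength at the level of $\FIVE$ rather than $\ACA_{0}$, defeating the purpose of the reduction.

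The gap is repairable, and the repair is exactly what the paper's citations supply. Since $C$ is separably closed, it is the closure of a given sequence $(x_{n})_{n\in\N}$, and since $f$ is assumed continuous on $C$, the condition ``$f$ maps $C\cap B(a,r)$ into the closed ball $\overline{B}(b,s)$'' is equivalent to $(\forall n\in\N)\big(x_{n}\in B(a,r)\di f(x_{n})\in\overline{B}(b,s)\big)$, which \emph{is} arithmetical in $f$ and $(x_{n})_{n\in\N}$ and hence decidable by $(\exists^{2})$; the real quantifier must first be collapsed to a number quantifier via density plus continuity. (The paper achieves the same end by citing Brown's result that, given arithmetical comprehension, separably closed sets are RM-closed, together with Kohlenbach's construction of codes for continuous functions.) Note that your ``cleaner alternative'' --- enumerating the components of $[0,1]\setminus C$ and interpolating linearly between boundary values --- sidesteps the problematic step entirely, since it only requires deciding the arithmetical predicate $x\in C$ and computing suprema over the dense sequence, both available from $(\exists^{2})$; modulo verifying continuity of $g$ at limit points of $C$, that route would close the case correctly.
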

\begin{proof}
We use the law of excluded middle as in $(\exists^{2})\vee \neg(\exists^{2})$.  
In case $\neg(\exists^{2})$, all functions on $\R$ are continuous by \cite{kohlenbach2}*{\S3}, and we may use $g=f$ to obtain $\TIET^{\omega}$. 
In case $(\exists^{2})$, we use \cite{brownphd}*{Theorem 1.10} to guarantee that any seperably closed set $C\subseteq \R$ is also closed in the sense of RM, i.e.\ given by a $\Pi_{1}^{0}$-formula in $\L_{2}$.
Note that $\exists^{2}$ can decide such formulas.  
By \cite{kohlenbach4}*{\S4}, we can obtain an RM-code for $f$ on $C$ using $\exists^{2}$.   Applying $\TIET^{2}$ to the RM-code of $f$, there is \emph{code for} a continuous function $g$ such that $f=_{\R}g$ on $C$.   
Apply $\QFAC^{1,0}$ to the totality of $g$ on $[0,1]$ to obtain the required continuous function.
\end{proof} 
\noindent
The above results can be neatly summarised as in \eqref{kut}, working over $\RCAo$:
\be\label{kut}
\TIET^{2}\di \ACA_{0} \di \WKL_{0}\di \TIET^{\omega}
\ee
%
Thirdly, we study a `reverse coding principle' that allows one to obtain a (higher-order) function from certain codes.  
\begin{princ}[$\RCP$]
For a RM-code $f$ defined on a separably closed set $C\subseteq [0,1]$, there is $h:[0,1]\di \R$ with $(\forall x\in C)(h(x)=_{\R}f(x))$.
\end{princ}
Note that a \emph{total} RM-code trivially\footnote{Just apply $\QFAC^{1,0}$ to the formula `$\alpha(x)$ is defined for all $x\in \R$' for the RM-code $\alpha$.}  yields a higher-order function with the same values, in contrast to the following theorem.
\begin{thm}\label{winkel}
The system $\RCAo$ proves $\RCP\di \ACA_{0}$.
\end{thm}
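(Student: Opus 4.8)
The plan is to derive $(\exists^{2})$ from $\RCP$, which yields $\ACA_{0}$ since the existence of Kohlenbach's functional $(\exists^{2})$ is equivalent to arithmetical comprehension over $\RCAo$. Following the excluded middle trick, I would split on $(\exists^{2})\vee \neg(\exists^{2})$. In the case $(\exists^{2})$ there is nothing left to prove. In the case $\neg(\exists^{2})$ all functions $\N^{\N}\di \N$ and $\R\di \R$ are continuous by \cite{kohlenbach2}*{\S3}, and I would aim to contradict this by using $\RCP$ to manufacture the discontinuous functional $(\exists^{2})$; the resulting contradiction forces $(\exists^{2})$, as required.

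For the construction, fix $Y\in \N^{\N}$; I would build a separably closed $C_{Y}\subseteq [0,1]$ together with an RM-code $f_{Y}$ on $C_{Y}$ whose honest value at the cluster point records the $\Sigma_{1}^{0}$-predicate $(\exists n)(Y(n)=0)$. Concretely, take a sequence $x_{n}\to 0$ and let the code-values along it switch from $0$ to a small positive quantity precisely when a witness $Y(n)=0$ first appears; the closure then adjoins $0$ and forces $f_{Y}(0)$ to equal $1$ if such a witness ever appears and $0$ otherwise. Applying $\RCP$ to $f_{Y}$ produces a total $h_{Y}$ with $h_{Y}(0)=f_{Y}(0)$, a real that is provably $0$ or $1$; evaluating it to precision $\tfrac{1}{4}$ is a finite computation in $\RCAo$ and thus decides $(\exists n)(Y(n)=0)$. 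Carrying this out uniformly in $Y$ --- e.g.\ by one application of $\RCP$ to a single code on $[0,1]$ carrying $Y$ as a parameter, or by collecting the $h_{Y}$ via $\QFAC$ --- yields a functional deciding $(\exists n)(Y(n)=0)$ for all $Y$, which is exactly $(\exists^{2})$, contradicting $\neg(\exists^{2})$.

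The hard part will be presenting $f_{Y}$ as a genuine RM-code that $\RCAo$ (together with $\neg(\exists^{2})$) can certify to be defined on all of the separably closed set $C_{Y}$, and in particular at the cluster point $0$: there the coded value encodes the undecidable predicate $(\exists n)(Y(n)=0)$, so naive neighbourhood conditions either become inconsistent across the witness or simply fail to cover $0$, leaving it outside the domain. This is precisely the phenomenon that a partial code need not correspond to a third-order object in a weak system, which is what makes $\RCP$ do real work; it is also the delicate step. I expect to resolve it by letting the output-radii of the neighbourhood conditions shrink to $0$ along $x_{n}$ and by leaning on $\neg(\exists^{2})$, hence on the continuity of all functions in sight, to certify that $h_{Y}$ extends $f_{Y}$ continuously, so that the value $h_{Y}(0)$ is pinned down and the decoding above goes through.
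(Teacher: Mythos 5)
Your proposal has a fatal conceptual gap: the very first step, deriving $(\exists^{2})$ from $\RCP$, is not just hard but impossible, since $\RCAo$ does not prove $\RCP\di(\exists^{2})$. The conclusion $\ACA_{0}$ of Theorem \ref{winkel} is the \emph{second-order} comprehension schema, and it must be reached without ever producing a discontinuous functional, because $\RCP$ is consistent with $\neg(\exists^{2})$. Indeed, let $M$ be a model of second-order $\ACA_{0}$ and consider the $\ECF$-style type structure of continuous functionals (those possessing associates in $M$) over $M$: this structure satisfies $\RCAo+\neg(\exists^{2})$, and it also satisfies $\RCP$, since for any code $f$ defined on a separably closed $C$ the theorem $\TIET^{2}$ (provable in $\ACA_{0}$ by \cite{withgusto}*{Theorem 6.9}) supplies a code $g$ for a \emph{total} continuous extension, and $g$ induces the required third-order function $h$ in the structure. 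So in your case distinction, the case $\neg(\exists^{2})$ can coexist with $\RCP$, and no construction can manufacture the contradiction you are after. The paper's proof is structured entirely differently and never passes through $(\exists^{2})$: it applies $\RCP$ to make the coded function total, applies $\TIET^{\omega}$ (Theorem \ref{WLS}, provable in $\RCAo$) to the resulting third-order function, and thus obtains a Tietze statement whose input is a code but whose extension is a third-order continuous function; it then observes that the second-order reversal $\TIET^{2}\di\ACA_{0}$ of \cite{withgusto}*{Theorem 6.9} still goes through for such an extension, because the $\Pi_{1}^{0}$-separation it uses is available in $\RCAo$ for formulas with type-two parameters.

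Independently of this, the construction of $f_{Y}$ cannot be carried out, and the difficulty you flag in your final paragraph is the whole problem, not a delicate step. For $\RCP$ to apply, $f_{Y}$ must be defined at the cluster point $0\in C_{Y}$, i.e.\ the code must contain neighbourhood conditions covering $0$ with arbitrarily small output radii. In the no-witness case these conditions pin $f_{Y}(0)$ to $0$, but they must be enumerated at finite stages, and each such condition is contradicted outright if a witness appears later and you then assign values near $1$ to points $x_{n}$ inside its input ball: the code becomes inconsistent. The only consistent repair is to let the post-witness value shrink with the stage $s_{0}$ of the first witness (say to $2^{-s_{0}}$), but then the gap between the two cases has no positive lower bound and evaluating $h_{Y}(0)$ to precision $\tfrac{1}{4}$ decides nothing. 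If instead you leave $0$ outside the domain of the code, then $f_{Y}$ is not defined on the separably closed set $C_{Y}$, so the hypothesis of $\RCP$ fails and you obtain no $h_{Y}$ at all; appealing to $\neg(\exists^{2})$-continuity cannot repair a vacuous hypothesis. A quick sanity check confirms no variant can succeed: when a code is defined at a point, its value there is obtained by an unbounded but terminating search through the code, hence $f_{Y}(0)$ would be computable from $Y$, and your decoding would make $\RCAo$ prove outright that $(\exists n)(Y(n)=0)$ is decidable uniformly in $Y$ --- contradicting the conservativity of $\RCAo$ over $\RCA_{0}$. (Your uniformization step has a further problem: $\QFAC^{1,0}$ produces number-valued choice functions, not the function-valued map $Y\mapsto h_{Y}$ you would need.)
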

\begin{proof}
Let $f$ be a RM-code defined on a separably closed $C\subset [0,1]$.
Then $\RCP$ yields $h:[0,1]\di \R$ which is continuous on $C$.
Applying $\TIET^{\omega}$ to $h$ (see Theorem~\ref{WLS}) then yields a (higher-order) version of $\TIET^{2}$ for $f$ given by RM-codes, but where $g:[0,1]\di \R$ is not given by a RM-code.  
However, the proof of $\TIET^{2}\di \ACA_{0}$ in \cite{withgusto}*{Theorem 6.9} still goes through when $g:[0,1]\di\R$ is not given by a code but is only continuous on $[0,1]$.  
This proof uses $\Pi_{1}^{0}$-separation and we note that $\RCAo$ proves separation for $\Pi_{1}^{0}$-formulas \emph{with type two parameters} in the same way as $\RCA_{0}$ proves $\Pi_{1}^{0}$-separation for $\L_{2}$-formulas.  
The latter result is in \cite{simpson2}*{IV.4.8}.  
\end{proof}
The previous proof becomes much easier if we work in $\RCAo+\WKL$, as it is established in \cite{kohlenbach4}*{\S4} that the latter system shows 
that a continuous $Y^{2}$ has a code on Cantor space; the same goes through for $[0,1]$ and hence $g$ from the proof.
We shall repeatedly use this `coding principle' in the remainder of this paper. 
We now consider the following somewhat strange corollary.
\begin{cor}\label{fledna}
The following are equivalent over $\RCAo$ to $\ACA_{0}$:
\begin{enumerate}
 \renewcommand{\theenumi}{\alph{enumi}}
\item $\TIET^{2}$
\item $\RCP$\label{frak}
\item {For a RM-code $f$ defined on a separably closed set $C\subseteq [0,1]$, there is \emph{continuous} $g:[0,1]\di \R$ such that $(\forall x\in C)(g(x)=_{\R}f(x))$}.\label{taal}
\end{enumerate}
\end{cor}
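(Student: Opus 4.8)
The plan is to establish the three equivalences by exploiting the theorems already proved and closing a small cycle of implications, with $\ACA_0$ serving as the common anchor. First I would observe that we already have $\TIET^2 \asa \ACA_0$ over $\RCA_0$ from \cite{withgusto}*{Theorem~6.9}, and that $\RCAo$ proves the same $\L_2$-sentences as $\RCA_0$ up to the insignificant details noted in the excerpt, so item (a) is equivalent to $\ACA_0$ essentially by citation. Theorem~\ref{winkel} gives us half of what we need for (b), namely $\RCP \di \ACA_0$ over $\RCAo$. So the real work is to close the loops: show $\ACA_0 \di \RCP$ and handle item (c).

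For $\ACA_0 \di \RCP$, I would work over $\RCAo$ and assume $\ACA_0$, which in the higher-order setting is equivalent to $(\exists^2)$. Given an RM-code $f$ defined on a separably closed $C \subseteq [0,1]$, the idea is that $(\exists^2)$ lets us decide the relevant $\Pi_1^0$-predicates, so we can evaluate the code pointwise: for each $x \in C$ the code $f$ determines a real value $f(x)$, and with $\exists^2$ available we can define the function $h\colon [0,1] \di \R$ that returns this value on $C$ (extending arbitrarily, say by $0$, off $C$). Concretely, one uses the coding machinery of \cite{kohlenbach4}*{\S4} together with $\QFAC^{1,0}$ applied to the totality statement, exactly as in the trivial footnote remark but now justified by $\exists^2$ rather than by totality of the code. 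This yields the desired $h$ with $(\forall x \in C)(h(x) =_\R f(x))$, establishing $\RCP$.

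For item (c), I would note it is a strengthening of $\RCP$: it demands that the extracted function $g$ be continuous on all of $[0,1]$, not merely agree with $f$ on $C$. The implication (c)$\di$(b) is immediate since any such $g$ witnesses $\RCP$ directly. For the reverse, (b)$\di$(c) — or better, $\ACA_0 \di$ (c) — I would again assume $(\exists^2)$, apply $\RCP$ to get $h$ agreeing with $f$ on $C$ and continuous on $C$, and then invoke Theorem~\ref{WLS} (that $\RCAo$ proves $\TIET^\omega$): applying $\TIET^\omega$ to $h$, which is continuous on the separably closed set $C$, yields a genuinely continuous $g\colon [0,1] \di \R$ with $g =_\R h =_\R f$ on $C$. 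This gives (c), and since $\RCP \di \ACA_0$ by Theorem~\ref{winkel}, every item cycles back to $\ACA_0$.

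The main obstacle I anticipate is the careful bookkeeping in the $\ACA_0 \di \RCP$ step: one must verify that $(\exists^2)$ genuinely suffices to turn a \emph{partial} RM-code on a separably closed set into a well-defined third-order function, which requires checking that the neighbourhood conditions defining the code can be evaluated and that the resulting assignment $x \mapsto f(x)$ is single-valued as a real number. This is where the interplay between separably closed and RM-closed (via \cite{brownphd}*{Theorem 1.10}) and the code-extraction of \cite{kohlenbach4}*{\S4} must be deployed exactly as in the proof of Theorem~\ref{WLS}, and the subtlety is ensuring the construction does not secretly require more than arithmetical comprehension.
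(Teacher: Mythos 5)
Your overall architecture is close to the paper's: item (a) is handled by citation, Theorem \ref{winkel} gives (b) $\di$ $\ACA_{0}$, item (c) trivially implies (b), and your composition of $\RCP$ with $\TIET^{\omega}$ (Theorem \ref{WLS}) to pass from (b) to (c) is exactly the composition the paper itself uses in the proof of Theorem \ref{winkel}. The genuine gap is your step $\ACA_{0}\di\RCP$: you assert that $\ACA_{0}$ ``in the higher-order setting is equivalent to $(\exists^{2})$'', and this is false. Over $\RCAo$, $(\exists^{2})$ implies $\ACA_{0}$, but the converse fails: $\RCAo+\ACA_{0}$ (indeed $\RCAo$ plus all of $\Z_{2}$) is consistent with $\neg(\exists^{2})$, i.e.\ with ``all type-two functionals are continuous'', as one sees from the $\ECF$-interpretation (Remark \ref{ECF}); the paper is explicit about this distinction elsewhere, e.g.\ it carefully separates $\RCAo+\ACA_{0}$ from $\ACAo$ in the discussion following Theorem \ref{nogo2}. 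Since the corollary asserts equivalence with the \emph{second-order} $\ACA_{0}$ (this is forced: $\TIET^{2}$ is an $\L_{2}$-sentence and cannot imply $(\exists^{2})$), your argument only establishes $(\exists^{2})\di\RCP$, which leaves the implication $\ACA_{0}\di$ (b) --- and hence the entire cycle --- unproven.

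The repair is simpler than your pointwise-evaluation construction and is what the paper actually does (packaged inside the $(\exists^{2})$ case of its excluded-middle argument, though only $\ACA_{0}$ is used there): from $\ACA_{0}$ you get $\TIET^{2}$ by the cited equivalence; applying $\TIET^{2}$ to the given RM-code $f$ on $C$ produces a \emph{total} RM-code for a continuous $g:[0,1]\di\R$ agreeing with $f$ on $C$; and a total code yields a genuine third-order function by applying $\QFAC^{1,0}$ to its totality (this is the footnote preceding Theorem \ref{winkel}). The resulting function is continuous, so $\ACA_{0}$ gives item (c) directly, hence also (b), with no appeal to $(\exists^{2})$. Note finally that your (b) $\di$ (c) step likewise needs no $(\exists^{2})$ at all --- $\RCP$ plus $\TIET^{\omega}$ already works over $\RCAo$ --- so the only place $(\exists^{2})$ genuinely entered your proposal is precisely the one place where it is not available.
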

\begin{proof}
Since $\TIET^{2}\asa \ACA_{0}$ by \cite{withgusto}*{Theorem 6.9}, we only need to prove $\eqref{frak}\di \eqref{taal}$ in light of the theorem.  In case $\neg(\exists^{2})$, all functions on $\R\di \R$ are continuous by \cite{kohlenbach2}*{\S3}.  
In case $(\exists^{2})$, we also have $\ACA_{0}$ and applying $\TIET^{2}$ yields \emph{a code for} a continuous $g:\R\di \R$ as in item \eqref{taal}.  
This code in turn yields the function required for item \eqref{taal} by applying $\QFAC^{1,0}$ to the totality of this code.  
The law of excluded middle now finishes the proof. 
\end{proof}
We note that the results in Corollary \ref{fledna} are extremely robust: we can endow $g$ in $\RCP$ with \emph{any} property weaker than continuity and the equivalence would still go through. 
Moreover, we observe that $\RCP$ is the weakest set existence axiom equivalent to $\TIET^{2}$ over $\RCAo$, while 
the former constitutes the non-constructive part of the Tietze extension theorem as in $\TIET^{2}$.  
Indeed, working over $\RCAo$, once the original function as in $\TIET^{2}$ is made total via $\RCP$, there is no additional
strength required to obtain a continuous extension function, as $\TIET^{\omega}$ is provable in $\RCAo$.
%

\smallskip

Finally, we show that item (A) from Section \ref{trintro} is rather strong when the associated notions are interpreted in second-order RM.  
For simplicity, we work over Cantor space $2^{\N}$.
\begin{thm}[$\RCAo$]
Let $D\subset 2^{\N}$ be a RM-closed set and let $f:D\di \N$ be a RM-code for a continuous function.
Then $(\forall x\in 2^{\N})(x\in D\asa \textup{$f(x)$ is defined})$ implies that `$x\in D$' is decidable, i.e.\ there is $\Phi^{2}$ such that $(\forall x\in 2^{\N})(x\in D\asa \Phi(x)=0)$.
\end{thm}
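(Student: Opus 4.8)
The plan is to exploit the asymmetry between the two sides of the biconditional hypothesis: membership in the RM-closed set $D$ is a $\Pi_{1}^{0}$ condition, whereas definedness of the RM-code $f$ is a $\Sigma_{1}^{0}$ condition, and the hypothesis forces these two to coincide as point-sets. This makes $D$ effectively clopen, and the deciding functional $\Phi^{2}$ can then be extracted by a dovetailed search that is certified by $\QFAC^{1,0}$.

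First I would unpack the two codes. Since $D\subset 2^{\N}$ is RM-closed, it is presented as the complement of an RM-open set $V$, coded by a sequence of finite binary strings, so that `$x\notin D$' is equivalent to $(\exists k)(\overline{x}k \textup{ meets the } V\textup{-code})$, a $\Sigma_{1}^{0}$ condition with $x$ and the code as parameters. The essential point on the other side is that the codomain $\N$ is \emph{discrete}: for a Simpson-style RM-code $f$ into $\N$, a single neighbourhood condition already pins down a value, so `$f(x)$ is defined' is likewise $\Sigma_{1}^{0}$, namely $(\exists k)(\overline{x}k \textup{ enters the domain-part of the } f\textup{-code})$. Call the corresponding open domain $U_{f}$. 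I would record that both matrices, once the witnessing stage $k$ is existentially quantified off, are $\Delta_{0}^{0}$ in the parameters $x$, $f$, and $V$.

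Next I would combine these with the hypothesis. The assumption $(\forall x\in 2^{\N})(x\in D \asa f(x)\downarrow)$ says exactly that $D = U_{f}$; together with $D^{c}=V$ this yields $U_{f}\cup V = 2^{\N}$ and $U_{f}\cap V=\emptyset$. Hence, reasoning classically, for every $x$ exactly one of the two $\Sigma_{1}^{0}$ conditions holds, and whichever holds is witnessed at some finite stage. Formally I would introduce a $\Delta_{0}^{0}$ formula $A(x,\langle k,b\rangle)$ expressing `$b\in\{0,1\}$ and stage $k$ witnesses $x\in U_{f}$ if $b=0$, or $x\in V$ if $b=1$', and prove $(\forall x\in 2^{\N})(\exists n)\,A(x,n)$ using $U_{f}\cup V=2^{\N}$. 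Applying $\QFAC^{1,0}$ to this $\forall^{1}\exists^{0}$ statement produces a functional $\Psi^{2}$ with $A(x,\Psi(x))$ for all $x$; reading off the bit $b$ from $\Psi(x)$ defines $\Phi$, and the disjointness $U_{f}\cap V=\emptyset$ guarantees $\Phi(x)=0 \asa x\in D$.

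The main obstacle is not the construction but verifying the two definability claims that make the whole argument go through in the weak base theory. That `$x\notin D$' is $\Sigma_{1}^{0}$ is immediate from the closed-set coding, but the claim that `$f(x)$ is defined' is genuinely $\Sigma_{1}^{0}$ (rather than $\Pi_{2}^{0}$, as it would be for a code into a general Polish space) relies crucially on the codomain being the discrete space $\N$, so that convergence of the code to a value is achieved by a single neighbourhood condition. Once this is pinned down, termination of the dovetailed search is a direct consequence of the hypothesis, and no set existence beyond $\QFAC^{1,0}$—already available in $\RCAo$—is needed.
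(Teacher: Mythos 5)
Your proposal is correct and is essentially the paper's own proof: the paper likewise observes that `$x\in D$' is $\Pi_{1}^{0}$ while `$f(x)$ is defined' is $\Sigma_{1}^{0}$, and applies $\QFAC^{1,0}$ to the forward implication of the hypothesis (which, once unwound, is exactly your $\forall^{1}\exists^{0}$ statement about a witness for $x\in V$ or $x\in U_{f}$) to extract $\Phi^{2}$. Your additional justification that discreteness of the codomain $\N$ is what makes definedness $\Sigma_{1}^{0}$ is a correct elaboration of a point the paper asserts without comment.
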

\begin{proof}
Note that `$x\in D$' is $\Pi_{1}^{0}$ while `$f(x)$ is defined' is $\Sigma_{1}^{0}$.  Applying $\QFAC^{1,0}$ to the forward implication yields the required $\Phi^{2}$.  
\end{proof}
We briefly discuss the foundational implications of the above, while a more detailed investigation may be found in Section \ref{FRM}.
Simpson states in \cite{simpson2}*{IX.3.18} that the partial realisation of Hilbert's program for the foundations of mathematics is a `very important direction of research'.
In a nutshell, the equivalence $\TIE^{2}\asa \ACA_{0}$ suggests that the Tietze extension theorem for separably closed sets is out of reach for this partial realisation, while 
Theorem~\ref{WLS} implies \emph{that this is not the case}.  

\smallskip

Finally, we study a  `one-point-extension theorem' from \cite{yokoyama3}*{\S3}, which was suggested to us by K.\ Yokoyama.
The reversal in Theorem \ref{KAYO} is attributed to K.\ Tanaka in \cite{yokoyama3}.    
\begin{thm}\label{KAYO}
The following assertions are pairwise equivalent over $\RCA_{0}$. 
\begin{enumerate}
\item  $\ACA_{0}$.
\item If $f$ is a RM-continuous function from $(0, 1)$ to $\R$ such that $\lim_{x\di +0}f(x)=0$, 
then there exists a RM-continuous function $\tilde{f}$ from $[0, 1)$ to $\R$ such that $\tilde{f}(x)=f(x)$ for $x\in (0,1)$ and $\tilde{f}(0)=0$.
\end{enumerate}
\end{thm}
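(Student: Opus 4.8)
The plan is to prove the two directions separately, with the reversal (2)$\,\to\,$(1) being the substantial half. Throughout I read an RM-code as a set of neighbourhood conditions in the sense of \cite{simpson2}*{II.6.1}, and I would use the standard fact that \emph{`the range of every injection $\N\to\N$ exists'} is equivalent to $\ACA_{0}$ over $\RCA_{0}$.

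For the forward direction (1)$\,\to\,$(2), first I would use $\ACA_{0}$ to manufacture a modulus of continuity for $f$ at $0$. Since $\lim_{x\to+0}f(x)=0$ holds by hypothesis, for each $k$ there is some $n$ with $|f(x)|<2^{-k}$ for all $x\in(0,2^{-n})$; the matrix `$f$ maps $(0,2^{-n})$ into $(-2^{-k},2^{-k})$' is arithmetical in the code of $f$, so $\ACA_{0}$ can apply the $\mu$-operator to define the function $k\mapsto n$. With this modulus in hand I would then patch the code of $f$ with neighbourhood conditions anchored at $0$ -- sending each ball $[0,2^{-n(k)})$ into $(-2^{-k},2^{-k})$ -- and verify that the resulting union is a legitimate code for a continuous $\tilde{f}$ on $[0,1)$ with $\tilde{f}(0)=0$ and $\tilde{f}=f$ on $(0,1)$. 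This verification is routine book-keeping about the consistency of neighbourhood conditions.

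For the reversal, let $g:\N\to\N$ be an injection; I want to produce $\range(g)$ as a set. I would build, inside $\RCA_{0}$, an explicit piecewise-linear $f$ on $(0,1)$ consisting of disjoint triangular bumps: at the point $x_{s}=2^{-s}$ place a bump of peak height $2^{-g(s)}$ supported in $(x_{s}-2^{-s-3},x_{s}+2^{-s-3})$ so that the bumps are pairwise disjoint, with $f=0$ off the bumps. The code of this $f$ is primitive recursive in $g$. Because $g$ is an injection one has $g(s)\to\infty$, so both the locations $x_{s}$ and the heights $2^{-g(s)}$ tend to $0$, whence $\lim_{x\to+0}f(x)=0$. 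Applying (2) yields a code for an extension $\tilde{f}$ on $[0,1)$ with $\tilde{f}(0)=0$; searching that code for a neighbourhood condition sending a ball about $0$ into $(-2^{-k},2^{-k})$ is a $\Sigma_{1}^{0}$ search that provably terminates (by continuity of $\tilde{f}$ at $0$), so $\RCA_{0}$ extracts from it a modulus \emph{function} $h$ with $|f(x)|<2^{-k}$ for $x<2^{-h(k)}$.

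The point of the construction is that $h$ decides $\range(g)$: if $s>h(k)$ then $2^{-s}<2^{-h(k)}$ forces the peak $2^{-g(s)}<2^{-k}$, i.e.\ $g(s)>k$; contrapositively every stage $s$ with $g(s)\le k$ satisfies $s\le h(k)$. Hence for each $m$ we have $m\in\range(g)$ iff $(\exists s\le h(m))(g(s)=m)$, a $\Delta_{1}^{0}$ condition in $g,h$, so $\range(g)$ exists and $\ACA_{0}$ follows. The main obstacle, and the crux of the whole argument, is the tension in the reversal between two statements about the constructed $f$: $\RCA_{0}$ \emph{must} prove the bare hypothesis $(\forall k)(\exists n)(\forall x\in(0,2^{-n}))(|f(x)|<2^{-k})$ -- which it does, since for fixed $k$ the $\Delta_{1}^{0}$ set $\{s:g(s)\le k\}$ injects via $g$ into $\{0,\dots,k\}$, hence is finite and bounded by the available $\Sigma_{1}^{0}$-induction -- yet $\RCA_{0}$ must \emph{not} prove the existence of the modulus \emph{function} $h$, for otherwise $\range(g)$ would already exist in $\RCA_{0}$. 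Keeping these apart -- proving the $\Pi_{3}^{0}$ limit statement while withholding the modulus, and showing that it is precisely principle (2) which supplies the modulus -- is where the care is needed.
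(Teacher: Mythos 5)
Your proposal is correct, but there is nothing in the paper to compare it against: the paper does not prove Theorem \ref{KAYO} at all. It imports the result from Yokoyama's work (cited as \cite{yokoyama3}, \S3), with the reversal attributed to K.~Tanaka, and the paper's own contribution is only the contrasting higher-order Theorem \ref{KAYO2}. Your argument is the standard one for this kind of statement and matches the cited literature in spirit. In the forward direction you use $\ACA_{0}$ exactly where it is genuinely needed, namely to Skolemise the $\Pi_{3}^{0}$ limit hypothesis into a modulus at $0$ (after the routine replacement of the real quantifier by rational points and non-strict inequalities, the matrix is indeed arithmetical in the code), which is then folded into the code of $\tilde{f}$. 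The reversal is the classical bump construction: the peak heights $2^{-g(s)}$ encode the injection $g$, and the three load-bearing details are all handled correctly: $\RCA_{0}$ proves the limit hypothesis for the constructed $f$, since finite pigeonhole plus $\Sigma_{1}^{0}$-induction shows each set $\{s: g(s)\leq k\}$ is bounded; the modulus $h$ is extracted from the code of $\tilde{f}$ by a terminating $\Sigma_{1}^{0}$ search whose least-witness graph is $\Delta_{1}^{0}$, so $h$ exists in $\RCA_{0}$ once (2) has supplied $\tilde{f}$; and then $m\in\range(g)\asa(\exists s\leq h(m))(g(s)=m)$ gives $\range(g)$ by $\Delta_{1}^{0}$-comprehension, whence $\ACA_{0}$ follows from the standard equivalence of $\ACA_{0}$ with the existence of ranges of injections. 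Your closing remark about the necessary gap between provability of the $\Pi_{3}^{0}$ limit statement and non-provability of its modulus is precisely the right diagnosis of why the reversal works.
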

This theorem is interesting, as the `extension' of $f$ consist only of one (obvious) point, while separably closed sets are not used. 
In contrast to $\ACA_{0}$ in the previous theorem, we have the following theorem.
\begin{thm}[$\RCAo$]\label{KAYO2}
If $f:\R\di \R$ is continuous on $(0, 1)$ and such that $\lim_{x\di +0}f(x)=0$, 
then there exists a function $\tilde{f}:\R\di \R$ such that $\tilde{f}(x)=f(x)$ for $x\in (0,1)$ and $\tilde{f}(0)=0$, i.e.\ $\tilde{f}$ is continuous on $[0, 1)$.
\end{thm}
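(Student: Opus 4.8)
The plan is to invoke the excluded-middle trick $(\exists^{2})\vee \neg(\exists^{2})$, exactly as in the proof of Theorem~\ref{WLS}, and to treat the two disjuncts separately. The point is that in one case the given function is \emph{already} continuous everywhere, while in the other case the functional $\exists^{2}$ supplies exactly the computational power needed to patch $f$ by hand.

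First, in case $\neg(\exists^{2})$, all functions $\R\di\R$ are continuous by \cite{kohlenbach2}*{\S3}; in particular $f$ itself is continuous at $0$. Continuity at $0$ forces the two-sided limit $\lim_{x\di 0}f(x)$ to equal $f(0)$, and hence the right limit $\lim_{x\di +0}f(x)=f(0)$. Since the hypothesis gives $\lim_{x\di +0}f(x)=0$, we obtain $f(0)=0$, so $\tilde{f}:=f$ already does the job: it agrees with $f$ on $(0,1)$, satisfies $\tilde{f}(0)=0$, and is continuous on all of $\R$, a fortiori on $[0,1)$.

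Second, in case $(\exists^{2})$, I would define the extension explicitly by the case distinction
\[
\tilde{f}(x):=\begin{cases} f(x) & \text{if } x>0,\\ 0 & \text{if } x\leq 0,\end{cases}
\]
observing that the conclusion only concerns the behaviour on $[0,1)$, so nothing need be said about $x\geq 1$ beyond $\tilde{f}(x)=f(x)$. The functional $\exists^{2}$ decides the $\Sigma_{1}^{0}$ predicate `$x>0$' (equivalently the $\Pi_{1}^{0}$ predicate `$x\leq 0$') for any real $x$ presented as a fast-converging Cauchy sequence, so $\tilde{f}$ is a genuine total third-order object; moreover it respects $=_{\R}$, since whenever $x=_{\R}y$ both reals fall in the same case and $f$ is itself a function. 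It then remains to verify continuity on $[0,1)$: at any $x_{0}\in(0,1)$ one chooses $\delta$ so small that $(x_{0}-\delta,x_{0}+\delta)\subseteq(0,1)$, whence $\tilde{f}=f$ on this neighbourhood and continuity of $f$ on $(0,1)$ transfers; at $x_{0}=0$ the required estimate $(\forall k)(\exists N)(\forall x\in[0,1))(0\le x<2^{-N}\di |\tilde{f}(x)|<2^{-k})$ follows immediately from $\tilde{f}(0)=0$ and the hypothesis $\lim_{x\di +0}f(x)=0$, since $\tilde{f}(x)=f(x)$ for $x>0$.

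The only delicate point is the second case: one must ensure that the piecewise prescription yields a legitimate \emph{total} function $\R\di\R$ rather than a mere partial code, and this is precisely where $\exists^{2}$ is used, namely to decide the real-number comparison and thereby evaluate $\tilde{f}$ uniformly. Without this functional the case split cannot be carried out, which is exactly what makes the second-order analogue in Theorem~\ref{KAYO} equivalent to $\ACA_{0}$; everything else is a routine $\epsilon$-$\delta$ verification.
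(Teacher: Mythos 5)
Your proposal is correct and follows essentially the same route as the paper: the excluded-middle trick $(\exists^{2})\vee\neg(\exists^{2})$, taking $\tilde{f}=f$ in the case $\neg(\exists^{2})$ (where all functions on $\R$ are continuous), and in the case $(\exists^{2})$ defining $\tilde{f}$ by an $\exists^{2}$-decidable case split on the real argument (the paper splits on `$x=0$' rather than `$x\leq 0$', an immaterial difference). Your additional verifications (extensionality, the $\epsilon$-$\delta$ check, and that $f(0)=0$ in the first case) simply spell out details the paper leaves implicit.
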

\begin{proof}
We make use of $(\exists^{2})\vee \neg(\exists^{2})$.  In the latter case, all functions on $\R$ are continuous by \cite{kohlenbach2}*{\S3} and $f=\tilde{f}$.  
In the former case, define $\tilde{f}(x)$ as $0$ if $x=0$ and $f(x)$ otherwise, using $\exists^{2}$.  Clearly, $\tilde{f}$ is continuous on $[0,1)$.  
\end{proof}
In conclusion, we have established the results from Figure \ref{kk} pertaining to the Tietze extension theorem.  
In doing so, we have identified $\RCP$ as the source of `non-constructivity' in $\TIET^{2}$.   Our results were proved using the `excluded middle trick' described in Section \ref{maintro}.  
This kind of proof suggests that third-order functions $f$ as in $\TIET^{\omega}$ come with a certain kind of constructive enrichment: the extension $g$ is implicit in $f$ via a simple (classical) case distinction.  
In particular, the weakness of $\TIET^{\omega}$ relative to $\TIET^{2}$ is due to this  additional (implicit) information.  
The reader may interpret this as $\TIET^{2}$ and $\TIET^{\omega}$ being variations of the same theorem with different coding or constructive enrichment.
We discuss the latter view in more detail in Section \ref{REF}, as well as the question whether second-order or third-order representations are more natural. 

\section{Theorems by Heine and Weierstrass}
We establish the results sketched in Figure \ref{kk} for Heine's \emph{continuity theorem} and Weierstrass' \emph{approximation theorem}.  
These theorems are studied in RM in \cite{simpson2}*{IV.2}.
We shall study these theorems for separably closed sets; our results are similar to those for Tietze's theorem and we shall therefore be brief.

\smallskip

First of all, the following two principles are the $\L_{2}$-versions of the Heine and Weierstrass theorems for separably closed sets.
\begin{princ}[$\HEI^{2}$]
 Let $C\subseteq[0,1]$ be separably closed.  Any RM-continuous $f:C \di \R$ is also uniformly continuous with a modulus on $C$.
 \end{princ}
\begin{princ}[$\WEI^{2}$]
 Let $C\subseteq[0,1]$ be separably closed.  For RM-continuous $f:C\di \R$, there is a sequence of polynomials $(p_{n})_{n\in \N}$ with $(\forall x\in C)(|f(x)-p_{n}(x)|<\frac{1}{2^{n}})$.
 \end{princ}
We have the following expected second-order RM result.
 \begin{thm}\label{klanty}
  The system $\RCA_{0}$ proves $\ACA_{0}\asa \HEI^{2}\asa \WEI^{2}$.
\end{thm}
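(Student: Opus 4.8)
The plan is to prove the three-way equivalence $\ACA_{0}\asa \HEI^{2}\asa \WEI^{2}$ over $\RCA_{0}$ by exploiting the tight analogy with the Tietze case and the known second-order RM of uniform continuity and polynomial approximation from \cite{simpson2}*{IV.2}. First I would recall that in plain second-order RM, the statements ``every continuous function on $[0,1]$ is uniformly continuous (with a modulus)'' and ``every continuous function on $[0,1]$ is the uniform limit of a sequence of polynomials'' are both equivalent to $\WKL_{0}$ (these are the classical Heine and Weierstrass results in \cite{simpson2}*{IV.2}). The jump from $\WKL_{0}$ to $\ACA_{0}$ here is forced by the use of \emph{separably closed} domains $C$ rather than all of $[0,1]$: the extra strength comes precisely from passing between a separably closed set and an RM-closed ($\Pi^0_1$) presentation of it, which by \cite{brownphd}*{Theorem 1.12} is exactly what $\ACA_{0}$ buys.

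For the forward directions $\ACA_{0}\di \HEI^{2}$ and $\ACA_{0}\di \WEI^{2}$, I would argue as follows. Assuming $\ACA_{0}$, invoke \cite{brownphd}*{Theorem 1.10/1.12} to convert the separably closed $C$ into an RM-closed subset of $[0,1]$; then the RM-continuous $f:C\di\R$ can be handled by the classical second-order arguments. Since $\ACA_{0}\di\WKL_{0}$, the Heine/Weierstrass conclusions for the now RM-closed domain follow from the $\WKL_{0}$-level results of \cite{simpson2}*{IV.2}, yielding both the modulus of uniform continuity ($\HEI^{2}$) and the approximating polynomial sequence ($\WEI^{2}$). The slight care needed is that these classical results are stated for $[0,1]$ or for RM-closed sets, so I would either restrict attention to the RM-closed presentation of $C$ or extend $f$ to $[0,1]$ first, whichever is cleaner in \cite{simpson2}'s framework.

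For the reversals $\HEI^{2}\di \ACA_{0}$ and $\WEI^{2}\di\ACA_{0}$, the natural route is to show each principle computes the range of an arbitrary injection $h:\N\di\N$, which is the standard $\ACA_{0}$-coding. Given such an $h$, I would build (in $\RCA_{0}$) a separably closed set $C\subseteq[0,1]$ and an RM-continuous $f$ on $C$ whose uniform continuity modulus (resp.\ whose uniformly-approximating polynomials) encodes enough information to decide $n\in\range(h)$; the construction of $C$ via the dense sequence $(x_n)$ can place points so that the local oscillation of $f$ near a cluster point reflects whether a given value has been enumerated. This is exactly the style of reversal recorded in \cite{withgusto}*{Theorem 6.9} for Tietze, and the author explicitly signals that the Heine/Weierstrass arguments are ``similar to those for Tietze's theorem,'' so I expect to reuse that coding machinery almost verbatim. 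The main obstacle, and the step I would spend the most care on, is the reversal for $\WEI^{2}$: extracting $\ACA_{0}$ from mere uniform polynomial approximation is more delicate than from a continuity modulus, since one must argue that the convergence data $(p_n)$ with $(\forall x\in C)(|f(x)-p_n(x)|<\tfrac{1}{2^n})$ lets $\RCA_{0}$ recover the separably-closed-to-RM-closed passage and hence the range of $h$; I would likely route this through $\HEI^{2}$ or through the separating-set argument of \cite{brownphd} rather than attempt a direct combinatorial reversal.
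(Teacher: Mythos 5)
Your proposal is correct in outline, but it takes a genuinely different route from the paper in the reversals, and the difference is instructive. The paper's reversals are entirely ``soft'' and require no new coding construction: specializing $\HEI^{2}$ to $C=[0,1]$ already yields $\WKL_{0}$ by \cite{simpson2}*{IV.2.3} (and likewise for $\WEI^{2}$); $\WKL_{0}$ in turn proves the \emph{strong} Tietze extension theorem for RM-continuous functions \emph{with} a modulus of uniform continuity by \cite{withgusto}*{Theorem~6.14}; since $\HEI^{2}$ supplies exactly such a modulus on the separably closed $C$, one obtains $\TIET^{2}$, hence $\ACA_{0}$ by \cite{withgusto}*{Theorem~6.9}. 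For $\WEI^{2}$ the paper does precisely what you call ``routing through $\HEI^{2}$'': $\WKL_{0}$ (already extracted) gives moduli for the sequence of polynomials, and the uniform approximation then yields a modulus for $f$ on $C$, after which the argument is identical. Your plan of re-running the range-of-an-injection coding of \cite{withgusto}*{Theorem~6.9} is mentioned in the paper only as an \emph{alternative}: the second half of that proof is based on uniform continuity, so $\HEI^{2}$ applies to it directly, and $\WEI^{2}$ as well since polynomials are uniformly continuous on $[0,1]$; so your instinct that the machinery transfers is sound, but the primary proof sidesteps the delicacy you (rightly) worry about for $\WEI^{2}$. On the forward direction, note a subtlety in your first variant: the $\WKL_{0}$-level results of \cite{simpson2}*{IV.2} concern $[0,1]$ or compact complete separable metric spaces, so after converting $C$ to an RM-closed set you would still need to show $C$ carries such a compact-space structure (finite $\varepsilon$-nets), an extra argument you do not supply; your second variant---extend $f$ first---is exactly the paper's proof: use $\TIET^{2}$ (available since $\TIET^{2}\asa\ACA_{0}$) to get a continuous $g$ on all of $[0,1]$, apply \cite{simpson2}*{IV.2.3-5} to $g$, and let $f$ inherit the modulus and the polynomials by restriction to $C$.
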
  
\begin{proof}
We have $\TIET^{2}\asa \ACA_{0}$ by \cite{withgusto}*{Theorem~6.9}, where the former was introduced in Section \ref{sepclo}.
To prove $\ACA_{0}\di \HEI^{2}$ or $\ACA_{0}\di \WEI^{2}$, let $f$ be RM-continuous on a separably closed set $C\subseteq [0,1]$.  Use $\TIET^{2}$ to obtain the `extension' $g$ of $f$ and use 
$\WKL$ to obtain the uniform continuity and approximation results for $g$, which all follow from \cite{simpson2}*{IV.2.3-5}.  These properties are then inherited from $g$ to $f$ on $C$, and the forward implications are done. 
Alternatively, $\ACA_{0}$ is equivalent to the Heine-Borel theorem for countable covers of separably closed sets \cite{jekke}, which readily yields $\HEI^{2}$ and $\WEI^{2}$.
 
\smallskip

For the implication $\HEI^{2}\di \ACA_{0}$, note that the antecedent for $C=[0,1]$ yields $\WKL_{0}$ by \cite{simpson2}*{IV.2.3}. 
In turn, $\WKL_{0}$ yields the \emph{strong} Tietze extension theorem for RM-continuous functions with a modulus of uniform continuity by \cite{withgusto}*{Theorem~6.14}.  
Putting together the above, we obtain $\HEI^{2}\di \TIET^{2}$ and $\ACA_{0}$ follows.  
 
\smallskip
 
For the implication $\WEI^{2}\di \ACA_{0}$, note that $\WKL_{0}$ follows from the antecedent in the same way. 
Now, $\WKL_{0}$ proves that an RM-continuous function has a modulus of uniform continuity, and the same holds for sequences of RM-continuous functions. 
This provides a modulus of uniform continuity for $f$ as in $\WEI^{2}$, and the rest of the proof is now the same as in the previous paragraph.

\smallskip

An alternative to the previous two paragraphs is provided by the second part of the proof of \cite{withgusto}*{Theorem 6.9} of $\TIET^{2}\di \ACA_{0}$.
Indeed, this proof is based on uniform continuity and $\HEI^{2}$ (more) directly applies.  The same holds for $\WEI^{2}$ by noting that polynomials are 
uniformly continuous on $[0,1]$.  Another alternative proof is based on the aforementioned result concerning the Heine-Borel theorem for countable covers of separably closed sets. 
 \end{proof}
 Thirdly, we consider the higher-order versions of $\HEI^{2}$ and $\WEI^{2}$, where the only change is that we replaced `second-order code' by `third-order function'.
\begin{princ}[$\HEI^{\omega}$]
 Let $C\subseteq[0,1]$ be separably closed.  Any $f:\R \di \R$ which is continuous on $C$ is also uniformly continuous with a modulus on $C$.
 \end{princ}
\begin{princ}[$\WEI^{\omega}$]
 Let $C\subseteq[0,1]$ be separably closed.  For $f:\R\di \R$ which is continuous on $C$, there is a sequence of polynomials $(p_{n})_{n\in \N}$ with $(\forall x\in C)(|f(x)-p_{n}(x)|<\frac{1}{2^{n}})$.
 \end{princ} 
 We now prove the following results from Figure \ref{kk}.

 \begin{thm}\label{klanty2}
 The system $\RCAo+\WKL$ proves $\HEI^{\omega}$ and $\WEI^{\omega}$.
 \end{thm}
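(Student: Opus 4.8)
The plan is to mirror the proof of Theorem~\ref{WLS} verbatim, invoking the excluded-middle trick $(\exists^{2})\vee\neg(\exists^{2})$ and treating the two cases separately. The guiding idea is that in either case we want to replace the third-order $f$ by an \emph{RM-code} to which a known second-order theorem applies; the sole purpose of the extra axiom $\WKL$ is to supply the modulus of uniform continuity (for $\HEI^{\omega}$) and the sequence of approximating polynomials (for $\WEI^{\omega}$) once such a code is in hand, since these encode compactness and are unavailable in $\RCAo$ alone.

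In case $(\exists^{2})$ we have higher-order $\ACA_{0}$. As in Theorem~\ref{WLS}, \cite{brownphd}*{Theorem~1.10} guarantees that the separably closed $C\subseteq[0,1]$ is also RM-closed, and \cite{kohlenbach4}*{\S4} lets us extract (using $\exists^{2}$) an RM-code for $f$ on $C$. Since $\ACA_{0}$ holds, Theorem~\ref{klanty} yields $\HEI^{2}$ and $\WEI^{2}$, which we apply to this code to obtain a modulus of uniform continuity, respectively a sequence of polynomials, valid on $C$. Because the code and $f$ agree on $C$, both conclusions transfer directly to $f$, giving $\HEI^{\omega}$ and $\WEI^{\omega}$ in this case.

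In case $\neg(\exists^{2})$, all functions on $\R$ are continuous (everywhere) by \cite{kohlenbach2}*{\S3}; in particular $f$ is continuous on all of $[0,1]$, not merely on $C$. This is exactly the hypothesis needed to invoke the coding principle of \cite{kohlenbach4}*{\S4}, which is provable in $\RCAo+\WKL$ and produces an RM-code $\alpha$ for $f$ on $[0,1]$. Now the classical second-order $\WKL_{0}$ results apply to $\alpha$: \cite{simpson2}*{IV.2.2} furnishes a modulus of uniform continuity on $[0,1]$ and \cite{simpson2}*{IV.2.5} furnishes the Weierstrass approximation on $[0,1]$. Restricting both to $C\subseteq[0,1]$ yields $\HEI^{\omega}$ and $\WEI^{\omega}$, and the law of excluded middle finishes the argument.

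I expect the main obstacle to be the bookkeeping around \emph{which domain} the code lives on in each branch, since this is the asymmetric point of the proof. In the $(\exists^{2})$ branch we can afford to code $f$ only on $C$ — cheaply, via $\exists^{2}$ — and feed that code to the $\ACA_{0}$-level theorems $\HEI^{2}/\WEI^{2}$. In the $\neg(\exists^{2})$ branch we instead exploit that $f$ is automatically continuous on the whole interval to code it on all of $[0,1]$ (which genuinely needs $\WKL$) and then appeal to the $\WKL_{0}$-level second-order theorems. Verifying that the coding principle of \cite{kohlenbach4}*{\S4} indeed goes through for $[0,1]$ (rather than just $2^{\N}$), and that the resulting modulus and polynomials restrict cleanly from $[0,1]$ to $C$, are the only steps requiring care; everything else is routine and parallels Theorem~\ref{WLS}.
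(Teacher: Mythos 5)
Your proposal is correct and follows essentially the same route as the paper's own proof: the excluded-middle trick, with the $(\exists^{2})$ branch handled via Brown's Theorem~1.10 plus Kohlenbach's coding of $f$ on $C$ and an appeal to Theorem~\ref{klanty}, and the $\neg(\exists^{2})$ branch handled by coding the (now everywhere continuous) $f$ on $[0,1]$ via $\WKL$ and invoking the classical second-order results of \cite{simpson2}*{IV.2}. The domain bookkeeping you flag is exactly the asymmetry present in the paper's argument as well, and your treatment of it is sound.
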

 \begin{proof}
 We use $(\exists^{2})\vee \neg(\exists^{2})$ as in the proof of Theorem \ref{WLS}.   
In case $\neg(\exists^{2})$, all functions on $\R$ are continuous by \cite{kohlenbach2}*{\S3}.  The usual proof using $\WKL_{0}$ of the Heine and Weierstrass theorems now goes through, in light 
of the coding results in \cite{kohlenbach4}*{\S4}.  Indeed, it is established in the latter that continuous functions on $2^{\N}$ have RM-codes, and the same holds for $[0,1]$.
  
\smallskip

In case $(\exists^{2})$, we use \cite{brownphd}*{Theorem 1.10} to guarantee that any seperably closed set $C\subseteq \R$ is also closed in the sense of RM, i.e.\ given by a $\Pi_{1}^{0}$-formula in $\L_{2}$.
Note that $\exists^{2}$ can decide such formulas.  By \cite{kohlenbach4}*{\S4}, we can obtain an RM-code for $f$ on $C$, and the theorem now follows from Theorem \ref{klanty}.
\end{proof}
The following equivalence is immediate, but conceptually important.
Indeed, while the change from second-order codes to third-order functions fundamentally changes the logical strength of the Heine and Weierstrass theorems for separably closed sets, 
the higher-order versions are still equivalent to one of the Big Five. 
\begin{cor}\label{GFE}
The system $\RCAo$ proves $\HEI^{\omega}\asa \WKL\asa \WEI^{\omega}$.
\end{cor}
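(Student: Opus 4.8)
The plan is to establish the two equivalences $\HEI^{\omega}\asa \WKL$ and $\WEI^{\omega}\asa \WKL$ over $\RCAo$ by combining the forward directions already packaged in Theorem~\ref{klanty2} with reversals obtained via the excluded-middle trick. The forward implications $\WKL\di \HEI^{\omega}$ and $\WKL\di \WEI^{\omega}$ are immediate from Theorem~\ref{klanty2}, since that result shows $\RCAo+\WKL$ proves both principles. Thus the only real content is the two reversals $\HEI^{\omega}\di \WKL$ and $\WEI^{\omega}\di \WKL$, and I would handle each by a case distinction on $(\exists^{2})\vee\neg(\exists^{2})$.

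The key observation driving the reversals is that the \emph{antecedent} of each higher-order principle is vacuously available for the specific case $C=[0,1]$: a function continuous on all of $[0,1]$ is trivially continuous on the separably closed set $[0,1]$. So I would first note that $\HEI^{\omega}$ (resp.\ $\WEI^{\omega}$) specialised to $C=[0,1]$ asserts exactly that every $f\colon\R\di\R$ continuous on $[0,1]$ has a modulus of uniform continuity (resp.\ a sequence of approximating polynomials). Now invoke $(\exists^{2})\vee\neg(\exists^{2})$. In the case $\neg(\exists^{2})$, all functions on $\R$ are continuous by \cite{kohlenbach2}*{\S3}, and in this world one can prove $\WKL$ outright (indeed $\neg(\exists^{2})$ implies much more than $\WKL$ at the level of the base theory, so the implication holds trivially). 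In the case $(\exists^{2})$, a continuous third-order $f$ on $[0,1]$ yields an RM-code by \cite{kohlenbach4}*{\S4}, so that the higher-order statement delivers the second-order conclusion for that code; one then appeals to the classical second-order reversals, namely that Heine's theorem $\HEI^{2}$ restricted to $[0,1]$ yields $\WKL_{0}$ by \cite{simpson2}*{IV.2.3}, and likewise that uniform approximation by polynomials on $[0,1]$ yields $\WKL_{0}$ by the same analysis as in Theorem~\ref{klanty}. Gluing the two cases with the law of excluded middle gives $\WKL$ in $\RCAo$.

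I expect the main subtlety to be the management of codes in the $(\exists^{2})$ case: one must be careful that the RM-code extracted from the third-order $f$ via \cite{kohlenbach4}*{\S4} is genuinely a code to which the classical reversal of $\HEI^{2}$ (resp.\ $\WEI^{2}$) applies, and that the modulus (resp.\ the polynomial sequence) produced by the higher-order principle transfers back to the coded setting correctly. This is the same bookkeeping that appears in the proofs of Theorem~\ref{WLS} and Theorem~\ref{klanty2}, so it is routine rather than deep, and indeed the statement is flagged as ``immediate'' in the surrounding text. Since both reversals reduce, in the nontrivial case, to the already-known second-order reversals recorded in Theorem~\ref{klanty}, the whole argument is short, and the only conceptual ingredient is the excluded-middle trick that converts the higher-order hypotheses into their second-order analogues wherever a discontinuous function is available.
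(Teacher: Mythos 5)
Your forward directions are fine (they are exactly Theorem~\ref{klanty2}), but both reversals break down, because the excluded-middle trick is the wrong tool here and your $\neg(\exists^{2})$ case rests on a false claim. Over $\RCAo$, $\neg(\exists^{2})$ does \emph{not} imply $\WKL$, let alone ``much more than $\WKL$'': under the $\ECF$-interpretation (Remark~\ref{ECF}), $[\neg(\exists^{2})]_{\ECF}$ is provable in $\RCA_{0}$, so if $\RCAo+\neg(\exists^{2})$ proved $\WKL$, then $\RCA_{0}$ would prove $\WKL$, which it does not. (You may be conflating $\neg(\exists^{2})$ with $\MUC$, which does imply both $\WKL$ and $\neg(\exists^{2})$ by the splittings in Figure~\ref{fagure}.) So in the $\neg(\exists^{2})$ case you are left having to derive $\WKL$ from $\HEI^{\omega}$ (resp.\ $\WEI^{\omega}$) itself, which is precisely the content of the reversal --- the case distinction has bought you nothing. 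There is also a directional slip in your $(\exists^{2})$ case: to invoke the classical reversal $\HEI^{2}\di\WKL_{0}$ you must \emph{prove} $\HEI^{2}$, i.e.\ start from an arbitrary RM-code and produce a modulus for it; extracting a code from a given third-order $f$ via \cite{kohlenbach4}*{\S4} goes the wrong way around.

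The paper's proof is uniform and needs no case split at all, which is why the corollary is flagged as ``immediate''. Specialise to $C=[0,1]$: a \emph{total} RM-code on $[0,1]$ yields a continuous third-order function with the same values by applying $\QFAC^{1,0}$ to the totality of the code --- this conversion is available in the base theory, since it is the cheap direction of the code/function correspondence (the opposite direction, function-to-code, is what costs $\WKL$ or $(\exists^{2})$). Hence $\HEI^{\omega}$ (resp.\ $\WEI^{\omega}$) applied to that function gives a modulus of uniform continuity (resp.\ approximating polynomials) for every total RM-code on $[0,1]$, and these second-order statements yield $\WKL_{0}$ by the classical results in \cite{simpson2}*{IV.2}, exactly as used in the reversals of Theorem~\ref{klanty}. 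If you repair your argument this way, the excluded-middle trick remains where it belongs: in the forward direction, i.e.\ inside Theorem~\ref{klanty2}.
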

\begin{proof}
For $C=[0,1]$, the theorem reduces to the well-known results from \cite{simpson2}*{IV.2}.  
A RM-code defined on $[0,1]$ gives rise to a continuous third-order functional by applying $\QFAC^{1,0}$ to the totality of the RM-code. 
\end{proof}
Next, an interesting observation regarding RM is made by Montalb\'an as follows.
\begin{quote}
To study the big five phenomenon, one distinction that I think is worth making is the one between robust systems and non-robust systems. A system is robust if it is equivalent to small perturbations of itself. \cite{montahue}*{p.\ 432}
\end{quote}
As it happens, the above versions of the Tietze, Heine, and Weierstrass theorems are robust too: we can restrict to e.g.\ bounded functions and the same equivalences as in Theorem \ref{klanty} and Corollary \ref{GFE} go through.  Similarly, we could study Brown's version of the Tietze extension theorem from \cite{brownphd} equivalent to $\ACA_{0}$, and the results would be the same as for the above versions.  

\smallskip

Finally, note that Tietze's extension theorem was the first theorem for which we established the results as in Figure \ref{kk}.
Following \cite{dagsamV}, one could say that the Heine and Weierstrass theorems also `suffer from the Tietze syndrome', if the latter name were not taken already.  
\section{Ekeland's variational principle}\label{ekesect}
We establish the results pertaining to Ekeland's variational principle as sketched in Figure \ref{kk}.
In a nutshell, we show that the equivalences for fragments of this principle involving $\ACA_{0}$ and $\FIVE$ \emph{disappear} when 
we formulate this principle in higher-order arithmetic.  We shall not study the exact details of the latter formulation, only what happens \emph{if} this is done.  
\subsection{At the level of arithmetical comprehension}
We  show that the equivalence between $\ACA_{0}$ and a fragment of Ekeland's variational principle disappears when 
we formulate this principle in higher-order arithmetic. 

\smallskip

First of all, the RM-properties of Ekeland's principle are studied in \cite{ekelhaft} \emph{inside the framework of second-order arithmetic};
Ekeland's \emph{weak variational principle} from \cite{oozeivar} is called `$\FVP$' in \cite{ekelhaft}, where the following results are proved.
\begin{thm}~
\begin{enumerate}
 \renewcommand{\theenumi}{\alph{enumi}}
\item Over $\RCA_{0}$, $\ACA_{0}$ is equivalent to $\FVP$ restricted to total \emph{honestly coded} potentials on $[0,1]$ \(or $2^{\N}$\).  
\item Over $\RCA_{0}$, $\WKL_{0}$ is equivalent to $\FVP$ restricted to total \textbf{continuous} potentials on $[0,1]$ \(or $2^{\N}$\).  
\end{enumerate}
\end{thm}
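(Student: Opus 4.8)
The plan is to prove both parts in parallel, since they differ only in the regularity demanded of the potential---continuous in part (b), honestly-coded (hence possibly merely lower semicontinuous) in part (a)---and correspondingly in the Big Five system attached, $\WKL_{0}$ versus $\ACA_{0}$. The common engine is the observation that an Ekeland point is, up to the tolerance $\eps$, a minimiser of the potential: the variational inequality $F(v_{\eps})\leq F(w)+\eps\cdot d(v_{\eps},w)$ for all $w$, together with the near-infimum bound, makes $\FVP$ a close relative of the statement \emph{`the potential (nearly) attains its infimum'}. I would first isolate this minimisation content and then match it to the known RM of extremum principles, where the regularity jump from continuous to lsc is exactly what separates $\WKL_{0}$ from $\ACA_{0}$.

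For the forward directions I would argue as follows. Over $\RCA_{0}$, the system $\WKL_{0}$ proves that a continuous $F$ on the compact spaces $[0,1]$ or $2^{\N}$ is bounded below and attains its infimum by \cite{simpson2}*{IV.2}; taking $v$ to be a minimiser makes the variational inequality trivial (since $F(v)\leq F(w)\leq F(w)+\eps\cdot d(v,w)$), establishing $\WKL_{0}\di \FVP$ for continuous potentials. For the honest case I would invoke that $\ACA_{0}$ proves the corresponding infimum-attainment principle for honestly-coded (lsc) potentials---this is the precise place where upgrading the regularity from continuous to lower semicontinuous forces the jump to $\ACA_{0}$---and conclude $\ACA_{0}\di \FVP$ for honest potentials in the same manner.

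The reversals are where the real work lies. For part (b) I would encode an infinite binary tree $T\subseteq 2^{<\N}$ (equivalently a $\WKL_{0}$-hard separation problem) into a continuous potential $F_{T}\colon 2^{\N}\di \R$, designed so that the value of $F_{T}$ along $\alpha$ is driven down precisely when $\alpha$ remains on $T$; applying $\FVP$ with a sufficiently small $\eps$ then yields a point lying cofinally on $T$, from which $\RCA_{0}$ extracts a path, giving $\WKL_{0}$. For part (a) the analogous construction uses an honestly-coded lsc potential built from an arbitrary $g\colon \N\di \N$ whose infimum behaviour decides membership in $\range(g)$; the $\FVP$ point then lets one compute $\range(g)$, yielding $\Sigma_{1}^{0}$-comprehension, i.e.\ $\ACA_{0}$.

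The main obstacle I anticipate is the reversal constructions and, in particular, verifying them inside $\RCA_{0}$. One must ensure that $F_{T}$ (resp.\ the honest potential) is genuinely a legitimate continuous (resp.\ honestly-coded) second-order object; that the tolerance $\eps$ can be chosen small enough that the encoded combinatorial data is forced out of the \emph{variational inequality} rather than merely the near-infimum bound, which alone would not separate the relevant trees; and---for part (a)---that the honest-coding device of \cite{ekelhaft}*{Def.\ 5.1} is respected so that the potential sits at exactly the lsc level distinguishing $\ACA_{0}$ from $\WKL_{0}$. Confirming that the weak Lipschitz-type inequality, as opposed to an actual minimiser, already suffices to read off the hard information is the delicate step.
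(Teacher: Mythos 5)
Your proposal cannot be checked against an internal argument, because the paper does not prove this theorem at all: it is imported verbatim from \cite{ekelhaft} (``\dots where the following results are proved''), serving only as the second-order benchmark against which the paper's higher-order observations about $\FVP^{\omega}_{\tot}$ are measured. So the comparison is necessarily with the constructions of \cite{ekelhaft}. That said, your forward directions are fine in outline and are the standard route: $\WKL_0$ proves that a continuous potential on $[0,1]$ or $2^{\N}$ attains its infimum (\cite{simpson2}*{IV.2}), the honestly coded lower semicontinuous case is exactly where $\ACA_0$ earns its keep, and an exact minimiser trivially satisfies both the near-infimum clause and the variational inequality.

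The genuine gap is in your reversal for part (b), precisely at the step you yourself flag as delicate, and as sketched it fails. Consider the natural tree potential $F_{T}(\alpha)=2^{-n_{0}}$ where $n_{0}=\sup\{n:\overline{\alpha}n\in T\}$ (so $F_{T}(\alpha)=0$ iff $\alpha$ is a path). It is false that, for a suitably small fixed $\eps$, an $\eps$-critical near-minimiser must ``lie cofinally on $T$'': if $T$ has a node branching off the rest of $T$ at depth $j$ whose extensions in $T$ die out by depth $j+m$ with $2^{-m}<\eps$, then any $v$ following that dead branch to its end and deviating from $T$ thereafter satisfies the \emph{full} $\FVP$ conclusion --- one checks $F_{T}(v)=2^{-(j+m)}<\eps$, while every $w$ with $F_{T}(w)<F_{T}(v)$ must differ from $v$ by depth $j$, so $F_{T}(w)>F_{T}(v)-\eps\,d(v,w)$ holds automatically --- yet $v$ computes no path. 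Since $\WKL$-hard trees (e.g.\ $\Sigma^{0}_{1}$-separation trees with late witnesses) contain such dead branches for every $m$, no choice of $\eps$, even depending on $T$, forces critical points to be paths; a single application of $\FVP$ only ever yields a node of length roughly $\log_{2}(1/\eps)$, which $\RCA_0$ finds unaided. More generally, continuity forces the penalty attached to a $\Sigma^{0}_{1}$-fact witnessed at stage $s$ to be of order $2^{-s}$, so for continuous potentials the variational inequality with fixed $\eps$ decodes only ``quickly witnessed'' data. A reversal that does work uses \emph{all} $\eps=2^{-k}$ and only the near-infimum clause: the values $F(v_{2^{-k}})$ of the resulting points form a Cauchy sequence with modulus, hence $\inf F$ exists as a real for every continuous potential, and that statement is equivalent to $\WKL_0$ over $\RCA_0$ by the results in \cite{simpson2}*{IV.2}. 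By contrast, in part (a) a single fixed $\eps$ \emph{can} carry the load, but only for a potential engineered so that flipping one ``wrong'' bit $n$ lowers the value by at least $\eps\cdot 2^{-n}$: for instance, charge $2^{-n}$ whenever $n\in\range(f)$ but $\alpha(n)=0$, and $2^{-n-1}$ whenever $\alpha(n)=1$; then every $\tfrac{1}{4}$-critical point is the characteristic function of $\range(f)$, giving $\Sigma^{0}_{1}$-comprehension --- modulo verifying that this lower semicontinuous (and essentially non-continuous) potential is honestly coded in the sense of \cite{ekelhaft}*{Def.\ 5.1}, which is the one ingredient neither of us can check from the present paper alone.
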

Now let $\FVP^{\omega}_{\tot}$ and $\FVP^{\omega}_{\tot, \cont}$ be some \emph{higher-order} versions of $\FVP$ for total (and continuous in the second case) potentials on $[0,1]$, i.e.\ the versions of $\FVP$ from respectively items (a) and (b), but \emph{not involving codes}.  To be absolutely clear, $\FVP^{\omega}_{\tot}$ has the form $(\forall f:[0,1]\di \R)A(f)$, while $\FVP^{\omega}_{\tot, \cont}$ has the form $(\forall f\in C([0,1]))A(f)$, which is readily expressed in the language of higher-order arithmetic.  The exact formulation does not matter, as long as the aforementioned syntactical form is available.  
Moreover, since we could also use $2^{\N}$ instead of $[0,1]$, none of what follows has anything to do with the coding of real numbers.  

\smallskip

As it happens, the higher-order principles $\FVP^{\omega}_{\tot, \cont}$ and $\FVP^{\omega}_{\tot}$ cannot satisfy the same properties as in items (a) and (b) above, by the following theorem.
\begin{thm}
Assume the following proofs are given. 
\begin{enumerate}
 \renewcommand{\theenumi}{\alph{enumi}$'$}
\item The system $\ACAo$ proves $\FVP_{\tot}^{\omega}$,\label{kedna}
\item The system $\RCAo+\WKL$ proves $\FVP_{\tot,\cont}^{\omega}$.\label{foreg}
\end{enumerate}
Then $\FVP^{\omega}_{\tot}$ is already provable in $\RCAo+\WKL$.  
\end{thm}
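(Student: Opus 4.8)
The plan is to deploy the excluded middle trick of Section \ref{maintro}, invoking the classical disjunction $(\exists^{2})\vee\neg(\exists^{2})$ and treating each disjunct separately while reasoning throughout inside $\RCAo+\WKL$. Since all systems here are based on classical logic, establishing $\FVP^{\omega}_{\tot}$ under each disjunct suffices to establish it outright.

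First I would handle the case $(\exists^{2})$. The point to notice is that $\RCAo+\WKL+(\exists^{2})$ is nothing other than $\ACAo$: by definition $\ACAo$ is $\RCAo$ augmented with $\exists^{2}$, and $\ACAo$ already proves $\WKL$, so adjoining $\WKL$ is redundant. Hence the assumed proof \eqref{kedna}, that $\ACAo$ proves $\FVP^{\omega}_{\tot}$, applies directly and yields $\FVP^{\omega}_{\tot}$ under this disjunct. Second, in the case $\neg(\exists^{2})$, I would use the fact from \cite{kohlenbach2}*{\S3} that every $f:\R\di\R$ is continuous. Consequently every total potential $f:[0,1]\di\R$ is automatically a continuous potential, so the quantifier $(\forall f:[0,1]\di\R)$ appearing in $\FVP^{\omega}_{\tot}$ ranges over exactly the same functions as the quantifier $(\forall f\in C([0,1]))$ appearing in $\FVP^{\omega}_{\tot,\cont}$, while the matrix $A(f)$ is the same in both. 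Thus the assumed proof \eqref{foreg}, that $\RCAo+\WKL$ proves $\FVP^{\omega}_{\tot,\cont}$, delivers $\FVP^{\omega}_{\tot}$ under this disjunct as well. Combining the two cases via the law of excluded middle then completes the argument over $\RCAo+\WKL$.

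There is no serious obstacle here; the only point requiring a moment's care is to confirm that the conclusion $A(f)$ of the variational principle is literally the same formula in $\FVP^{\omega}_{\tot}$ and $\FVP^{\omega}_{\tot,\cont}$, so that the reduction in the $\neg(\exists^{2})$ case is a genuine specialisation of the quantifier domain rather than a change of statement. Given that both principles are explicitly described as versions of the single principle $\FVP$ differing \emph{only} in the admitted class of potentials, this is immediate from their stated syntactic forms $(\forall f:[0,1]\di\R)A(f)$ and $(\forall f\in C([0,1]))A(f)$. I would finally remark that, exactly as for $\FVP$ itself, the entire argument is insensitive to the choice of underlying space, so the same proof goes through verbatim with $2^{\N}$ in place of $[0,1]$; this is what makes the conclusion a genuine constraint on any \emph{pair} of hypothetical proofs of the shape \eqref{kedna} and \eqref{foreg}, rather than a statement about the coding of the reals.
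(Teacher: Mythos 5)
Your proof is correct and follows essentially the same route as the paper's own argument: the case split on $(\exists^{2})\vee\neg(\exists^{2})$, with item (a$'$) handling the first disjunct (via the observation that $\RCAo+\WKL+(\exists^{2})$ is just $\ACAo$) and the continuity of all $\R\di\R$-functions under $\neg(\exists^{2})$ reducing $\FVP^{\omega}_{\tot}$ to $\FVP^{\omega}_{\tot,\cont}$ so that item (b$'$) applies. Your explicit verification that the two principles share the same matrix $A(f)$ and that the argument is uniform in the underlying space is a welcome, if minor, elaboration of what the paper leaves implicit.
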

\begin{proof}
We use the law of excluded middle $(\exists^{2})\vee \neg(\exists^{2})$.  
The proof in the former case follows thanks to item \eqref{kedna}.  In case $\neg(\exists^{2})$, all functions on $\R$ are continuous by \cite{kohlenbach4}*{\S4}.  
Hence, $\FVP^{\omega}_{\tot}$, which has the form $(\forall f:[0,1]\di \R)A(f)$, reduces to $\FVP^{\omega}_{\tot, \cont}$, which has the form $(\forall f\in C([0,1]))A(f)$.  
The latter has a proof thanks to item \eqref{foreg}, and we are done. 
\end{proof}
To be absolutely clear, we do not claim that anything is \emph{wrong} with the RM of $\FVP$ as in items (a) and (b) above.  We \emph{do} claim that no higher-order version of $\FVP$ can satisfy the same properties.  Indeed, 
items (a$'$) and (b$'$) are enough to make $\FVP_{\tot}^{\omega}$ provable in $\RCAo+\WKL$, and hence $\FVP_{\tot}^{\omega}$ \emph{cannot} imply $\ACA_{0}$.
The discrepancy between the second- and higher-order RM is (presumably) caused by the use of codes in second-order arithmetic, in particular the concept of `honest code' from item (a).  
Note that every continuous function on $\N^{\N}$ has an RM-code on $2^{\N}$ given $\WKL$ \cite{kohlenbach4}*{\S4}, i.e.\ item (b$'$) seems rather natural.  

\smallskip

The observation from this section is not an isolated incident, as we will see next.
\subsection{At the level of hyperarithmetical comprehension}\label{cook}
We  show that the equivalence between $\FIVE$ and a fragment of Ekeland's variational principle disappears when 
we formulate this principle in higher-order arithmetic. 
 
 \smallskip
 
The following results regarding $\FVP$ are from \cite{ekelhaft}.
\begin{thm}\label{hogo}
The system $\RCA_{0}$ proves the following equivalences.
\begin{enumerate}
 \renewcommand{\theenumi}{\alph{enumi}}
\item[(c)] $\FIVE$ is equivalent to $\FVP$ restricted to total potentials on $\N^{\N}$.  
\item[(d)] $\ACA_{0}$ is equivalent to $\FVP$ restricted to total \textbf{continuous} potentials on $\N^{\N}$.  
\end{enumerate}
\end{thm}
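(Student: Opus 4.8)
The plan is to prove both equivalences by separating, in each case, the (constructive) Ekeland descent—whose difficulty is pinned down exactly by the comprehension needed to compute infima of the potential over subsets of $\N^{\N}$—from the reversal, where one encodes a comprehension instance into a potential so that its variational point reveals the desired set. Throughout, recall that $\FVP$ asserts, for a potential $f$ bounded below and $\eps>0$, the existence of a point $v$ with $f(v)\leq \inf f + \eps$ and $(\forall w)(f(w)\geq f(v)-\eps\cdot d(v,w))$, obtained by the usual iteration: one builds a sequence $(x_{n})$ with $x_{n+1}$ an approximate minimiser of $f$ on the shadow $S_{n}=\{w: f(w)+\eps\, d(x_{n},w)\leq f(x_{n})\}$, shows that $\sum_{n} d(x_{n},x_{n+1})$ converges because the $f$-values decrease and are bounded below, and sets $v=\lim_{n}x_{n}$. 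The whole argument turns on (i) forming $\inf_{S_{n}}f$ as a real, (ii) choosing the witnesses $x_{n}$, and (iii) taking the limit with a verifiable rate; the level of these three operations is what the equivalences measure.

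For the forward directions I would argue as follows. For item (d), a \emph{continuous} $f:\N^{\N}\di\R$ is determined by its values on the countable dense set of eventually-zero sequences, so for each arithmetically coded shadow $S_{n}$ the quantity $\inf_{S_{n}}f$ is the infimum of an arithmetically definable sequence of reals; $\ACA_{0}$ forms this infimum and, uniformly, the whole descent sequence $(x_{n})$, after which the Cauchy limit exists (the rate being controlled by the decreasing $f$-values). Hence $\ACA_{0}\di \FVP$ for total continuous potentials on $\N^{\N}$. For item (c) the potential is an \emph{arbitrary} total $f:\N^{\N}\di\R$, and now $\inf_{S}f<q$ unwinds to $(\exists w\in\N^{\N})(w\in S\wedge f(w)<q)$, which is properly $\Sigma_{1}^{1}$; the sets $S_{n}$, the infima, and the convergence of the descent are all $\Pi_{1}^{1}$ in the parameters, so $\FIVE$ is exactly what is needed to carry (i)--(iii) through, giving $\FIVE\di\FVP$ on $\N^{\N}$.

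For the reversals I would encode the canonical comprehension instances. For item (d) it suffices to produce, from an arbitrary $g:\N\di\N$, the range $\{n:(\exists m)(g(m)=n)\}$, which is equivalent to $\ACA_{0}$ over $\RCA_{0}$: I would build a continuous, bounded-below potential $f_{g}$ on $\N^{\N}$ whose shadows place a well of controlled depth in the region coding $n$ precisely when $n$ enters the range, so that the $\FVP$ point $v$, together with the value $f_{g}(v)$ read to dyadic precision, lets $\RCA_{0}$ decide membership in the range. For item (c) I would use that $\FIVE$ is equivalent over $\RCA_{0}$ to forming $\{k:T_{k}\text{ is well-founded}\}$ for a sequence of trees $(T_{k})_{k\in\N}$; I would build a single \emph{total} (necessarily discontinuous) potential $f$ on $\N^{\N}$ that, on a region of Baire space reserved for index $k$, descends without bound exactly along the infinite paths of $T_{k}$, so that the presence or absence of a variational obstruction in that region detects ill-foundedness of $T_{k}$. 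Reading the characteristic function of the well-founded set off the $\FVP$ behaviour then yields $\FIVE$.

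The main obstacle I anticipate is the reversal for item (c): a single application of $\FVP$ returns a single variational point, whereas $\FIVE$ asks for the \emph{entire} set $\{k:T_{k}\text{ well-founded}\}$ at once, so the delicate part is designing one bounded-below total potential on $\N^{\N}$ whose variational behaviour simultaneously and uniformly encodes well-foundedness across all indices $k$, while keeping the decoding arithmetical (so that it can be performed in $\RCA_{0}$). A secondary difficulty, on the forward side, is the bookkeeping certifying that the descent sequence is Cauchy with a rate expressible at the claimed comprehension level—$\ACA_{0}$ in (d) and $\FIVE$ in (c)—so that the limit $v$ genuinely exists and no hidden appeal to stronger comprehension slips in.
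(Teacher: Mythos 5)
You could not have known this, but the paper contains \emph{no proof} of Theorem \ref{hogo}: the sentence immediately preceding it reads ``The following results regarding $\FVP$ are from \cite{ekelhaft}'', i.e.\ items (c) and (d) are imported as a black box from Fern\'andez-Duque--Shafer--Yokoyama, and the present paper never argues them. So your proposal must stand on its own, and as written it does not, for a concrete reason already in the forward direction of (d). You claim that, since a continuous $f$ is determined by its values on the eventually-zero sequences, $\inf_{S_{n}}f$ over the shadow $S_{n}=\{w: f(w)+\eps\, d(x_{n},w)\leq f(x_{n})\}$ is the infimum of an arithmetically definable sequence of reals. This is false: $S_{n}$ is a \emph{closed} subset of $\N^{\N}$, the dense set need not be dense in it (it may miss it entirely except for $x_{n}$), and infima of continuous functions over closed subsets of Baire space are genuinely $\Sigma^{1}_{1}$. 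Concretely, given a tree $T$ (shifted so that no path starts with $x(0)$), put $f(w)=\eps\cdot 2^{-k(w)}$ where $k(w)$ is the length of the longest initial segment of $w$ lying in $T$; then $f$ is continuous with an $\RCA_{0}$-definable code, the shadow of $x$ is exactly $[T]\cup\{x\}$, and $\inf_{S}f$ equals $0$ or $\eps$ according as $T$ has a path or not, so computing such infima to precision $\eps/2$ decides $\Sigma^{1}_{1}$-complete statements. Your descent for (d) is therefore secretly running at the level of (c). The actual content of the $\ACA_{0}$ proof is to avoid closed shadows altogether, e.g.\ by descending through open relaxations $U_{n}=\{w: f(w)+\eps\, d(x_{n},w)< f(x_{n})+\delta_{n}\}$ with $\sum_{n}\delta_{n}<\infty$, over which density of the rational points \emph{does} yield arithmetic infima, and then redoing the convergence and criticality estimates carrying the slack $\delta_{n}$; this idea is absent from your sketch.

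The reversals, which carry the real weight of both equivalences, are aspirations rather than constructions, and the obstacle you yourself flag is never overcome. Two constraints are ignored. First, over $\RCA_{0}$ there is no such thing as an ``arbitrary total $f:\N^{\N}\di\R$'': in \cite{ekelhaft} potentials are second-order codes (for lower semicontinuous $[0,\infty]$-valued maps), ``total'' is a property of the code, and any reversal potential must be a code that $\RCA_{0}$ can construct from the parameter $g$ or $(T_{k})_{k\in\N}$; in particular, penalties that presuppose deciding ``$n\notin\range(g)$'' or ``$T_{k}$ is well-founded'' are not codeable, which rules out the most natural ``well of controlled depth'' designs. Second, criticality for a fixed $\eps$ is a local, quantitative property: with the natural potential that charges $2^{-n}$ per encoding error, an $\eps$-critical point is only forced to have value at most $\eps$, i.e.\ to be correct up to finitely many errors, and this does not let $\RCA_{0}$ decide membership for \emph{every} $n$ --- precisely the single-point-versus-whole-set problem you anticipate but do not solve. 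A final discrepancy: $\FVP$ is the \emph{free} variational principle, whereas your formulation adds the clause $f(v)\leq\inf f+\eps$; \cite{ekelhaft} keep free and localized/minimality forms strictly apart because such clauses affect the calibration. In sum, the architecture (descent calibrated by the complexity of infima, reversal by encoding comprehension) points in the right direction, but at each place where the calibration is actually decided --- arithmetic versus $\Sigma^{1}_{1}$ infima, and the design of codeable reversal potentials --- the sketch either errs or defers.
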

Now let $\FVP^{\omega}_{\tot}(\N^{\N})$ and $\FVP^{\omega}_{\tot, \cont}(\N^{\N})$ be some \emph{higher-order} versions of $\FVP$ for total (and continuous in the second case) potentials on $\N^{\N}$, i.e.\ the versions of $\FVP$ from respectively items (c) and (d), but \emph{not involving codes}.  To be absolutely clear, $\FVP^{\omega}_{\tot}(\N^{\N})$ has the form $(\forall f:\N^{\N}\di \N^{\N})A(f)$, while $\FVP^{\omega}_{\tot, \cont}$ has the form $(\forall f\in C(\N^{\N}))A(f)$, which is readily expressed in the language of higher-order arithmetic.

\smallskip

As it happens, the above higher-order principles cannot satisfy the same properties as in items (c) and (d) above: the following two items already imply that $\FVP^{\omega}_{\tot}(\N^{\N})$ is provable in a conservative extension of $\ACA_{0}$.  Note that $\RCAo+\ACA_{0}+(\kappa_{0}^{3})$ is conservative\footnote{The system $\RCAo+\MUC$ from \cite{kohlenbach2}*{Prop.\ 3.12} readily proves $(\kappa_{0}^{3})$, and hence the associated conservation result for the latter follows.} over $\ACA_{0}$ by \cite{kohlenbach2}*{Prop.\ 3.12}.

\begin{thm}\label{nogo2}
Assume the following proofs are given.  
\begin{enumerate}
\item[(c$'$)] The system $\Z_{2}^{\Omega}$ proves $\FVP_{\tot}^{\omega}(\N^{\N})$.
\item[(d$'$)] The system $\RCAo+\ACA_{0}$ proves $\FVP_{\tot,\cont}^{\omega}(\N^{\N})$.
\end{enumerate}
Then $\RCAo+\ACA_{0}+(\kappa_{0}^{3})$ already proves $\FVP_{\tot}(\N^{\N})$.
\end{thm}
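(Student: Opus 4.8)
The plan is to mimic the excluded-middle trick used in the proof of Theorem~\ref{WLS} and in the preceding arithmetical-level result, splitting on $(\exists^{2})\vee\neg(\exists^{2})$ and arranging that each disjunct is discharged by exactly one of the two assumed proofs. Throughout I would work over the base theory $\RCAo+\ACA_{0}+(\kappa_{0}^{3})$ and adjoin one case of the disjunction as a temporary hypothesis, aiming to prove $\FVP^{\omega}_{\tot}(\N^{\N})$ in both cases and then combine them by the law of excluded middle.

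First I would treat the case $\neg(\exists^{2})$. Here every functional on $\N^{\N}$ is continuous by \cite{kohlenbach2}*{\S3} (equivalently \cite{kohlenbach4}*{\S4}), so any total potential $f:\N^{\N}\di \N^{\N}$ is automatically continuous. Consequently $\FVP^{\omega}_{\tot}(\N^{\N})$, which has the form $(\forall f:\N^{\N}\di\N^{\N})A(f)$, collapses to $\FVP^{\omega}_{\tot,\cont}(\N^{\N})$, which has the form $(\forall f\in C(\N^{\N}))A(f)$. The latter is supplied by hypothesis (d$'$), and since (d$'$) only needs $\RCAo+\ACA_{0}$ this branch is immediate and uses neither $(\kappa_{0}^{3})$ nor (c$'$).

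The case $(\exists^{2})$ is where the real work lies. Here I would invoke (c$'$), but the difficulty is that (c$'$) is a derivation inside $\Z_{2}^{\Omega}$, whereas the case hypothesis only hands us $\RCAo+\ACA_{0}+(\kappa_{0}^{3})+(\exists^{2})$. The plan is therefore to argue that, once the functional $(\exists^{2})$ is switched on, the extra functional $(\kappa_{0}^{3})$ supplies precisely the type-three strength that the proof (c$'$) consumes beyond $\ACAo$; concretely, one must check that $\RCAo+(\exists^{2})+(\kappa_{0}^{3})$ already suffices to run the $\Z_{2}^{\Omega}$-proof of $\FVP^{\omega}_{\tot}(\N^{\N})$. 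Putting the two branches together by $(\exists^{2})\vee\neg(\exists^{2})$ then yields $\FVP^{\omega}_{\tot}(\N^{\N})$ over $\RCAo+\ACA_{0}+(\kappa_{0}^{3})$, as required.

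The hard part, and the crux of the whole argument, is exactly this verification: that $(\kappa_{0}^{3})$, activated by $(\exists^{2})$, reaches the strength needed for (c$'$), while at the same time remaining so inert on its own that $\RCAo+\ACA_{0}+(\kappa_{0}^{3})$ stays conservative over $\ACA_{0}$ (as recorded in the footnote via $\RCAo+\MUC$ and \cite{kohlenbach2}*{Prop.\ 3.12}). The asymmetry to be exploited is that the conclusion system does not prove $(\exists^{2})$, since its $\ACA_{0}$ is only the second-order comprehension scheme rather than the functional; hence the conclusion system cannot by itself climb to $\Z_{2}^{\Omega}$, which is what protects the conservation result, yet the $(\exists^{2})$-branch may borrow the missing strength for free. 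Making this borrowing precise, that is, isolating which fragment of $\Z_{2}^{\Omega}$ the proof (c$'$) actually uses and checking it is covered by $(\exists^{2})+(\kappa_{0}^{3})$, is the step I expect to require the most care.
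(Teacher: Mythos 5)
Your overall structure (split on $(\exists^{2})\vee\neg(\exists^{2})$, discharge the $\neg(\exists^{2})$ case via (d$'$) since then all functionals on $\N^{\N}$ are continuous) matches the paper, and that branch is correct. But in the $(\exists^{2})$ branch there is a genuine gap: you frame the crux as ``isolating which fragment of $\Z_{2}^{\Omega}$ the proof (c$'$) actually uses and checking it is covered by $(\exists^{2})+(\kappa_{0}^{3})$''. That verification cannot be carried out as stated, because (c$'$) is an \emph{arbitrary} assumed derivation: the theorem gives you only the existence of some proof in $\Z_{2}^{\Omega}$, with no access to its internals, so any argument that depends on analysing what (c$'$) consumes is not available. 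If the theorem needed such an analysis, its conclusion would depend on \emph{which} proof (c$'$) was supplied, which is not what the statement asserts.

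The missing ingredient, which the paper uses and which makes the $(\exists^{2})$ case immediate, is Kohlenbach's equivalence, provable in $\RCAo$:
\[
(\exists^{3})\ \asa\ \big[(\exists^{2})+(\kappa_{0}^{3})\big],
\]
cited in the paper from \cite{dagsam}*{\S6} and \cite{samsplit}. With this, under the case hypothesis $(\exists^{2})$ the conclusion's base theory $\RCAo+\ACA_{0}+(\kappa_{0}^{3})$ proves $(\exists^{3})$, i.e.\ it \emph{contains} $\Z_{2}^{\Omega}\equiv\RCAo+(\exists^{3})$ outright. Hence the assumed proof (c$'$) applies wholesale --- there is no ``fragment'' to isolate and no borrowing to make precise; the full system whose existence of a proof you assumed is literally at hand. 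Your observation about the conservation asymmetry (that $\ACA_{0}$ as a second-order scheme does not yield the functional $\exists^{2}$, which is what keeps $\RCAo+\ACA_{0}+(\kappa_{0}^{3})$ conservative over $\ACA_{0}$) is correct but tangential to the proof itself; the substantive step you left open is exactly the displayed equivalence.
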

\begin{proof}
As mentioned in \cite{dagsam}*{\S6} or \cite{samsplit}, $\RCAo$ proves $(\exists^{3})\asa [(\exists^{2})+(\kappa_{0}^{3})]$, which was first proved by Kohlenbach in a private communication.   
Now use $(\exists^{2})\vee \neg(\exists^{2})$ as in the previous proof.
\end{proof}
To be absolutely clear, we do not claim that anything is \emph{wrong} with the RM of $\FVP$ as in items (c) and (d) above.  We \emph{do} claim that no higher-order version of $\FVP$ can satisfy the same properties.  Indeed, 
items (c$'$) and (d$'$) are enough to make $\FVP_{\tot}^{\omega}(\N^{\N})$ provable in a conservative extension of $\ACA_{0}$ by Theorem \ref{nogo2}, and hence $\FVP_{\tot}^{\omega}(\N^{\N})$ \emph{cannot} imply $\FIVE$.
The discrepancy between the second- and higher-order RM is (presumably) caused by the use of codes in RM.  
Note that we use $\RCAo+\ACA_{0}$ in the theorem and item (d$'$), rather than $\ACAo$.  This does not really constitute a weakening as the former system already proves that every continuous 
functional on Baire space has an RM-code by \cite{kohlenbach4}*{\S4}.   

\smallskip

The results in Theorem \ref{nogo2} are also foundationally significant as follows: item (d) above could lead one to claim that the associated version of Ekeland's variational 
principle is not available in \emph{predicativist mathematics} (see \cite{simpson2}*{I.11.9} or \cite{fefermanmain}).  Such a claim is debatable as Theorem \ref{nogo2} provides 
a way of `pushing down' theorems to predicatively reducible mathematics.  We discuss this observation in more detail in Section \ref{FRM}.  

\smallskip

A possible criticism of Theorem \ref{nogo2} is that the axiom $(\kappa_{0}^{3})$ is somewhat ad hoc and not that natural.  
One can replace this axiom by more natural theorems as follows: the Lindel\"of lemma for Baire space together with $(\exists^{2})$ proves $\FIVE$, while the Heine-Borel theorem 
for uncountable covers of Cantor space yields $\ATR_{0}$ when combined with $(\exists^{2})$ (see \cite{dagsamIII, dagsamV} for these results).  
Thus, one readily modifies Theorem \ref{nogo2} and its proof to work with the aforementioned theorems rather than $(\kappa_{0}^{3})$. 
Note that in the case of the Lindel\"of lemma, the resulting theorem still applies to item (c) and $\FIVE$ in particular.
%
%



\section{Codes of various order and naturalness}\label{REF}
\subsection{Introduction}
The above results raise the obvious question whether third-order objects are a more natural form of representation than second-order objects.
We discuss various possible (partial) answers in this section, as follows.  

\smallskip

First of all, we discuss some relevant history of mathematics in Section \ref{histo}, namely pertaining to the modern concept of function.
We establish that the latter predates set theory and therefore should be studied in RM.  A more definitive answer shall be obtained in Section \ref{racey}.     

\smallskip

Secondly, we discuss the coding of open sets in Section \ref{kannietcopen}, motivated 
by the close connection between the coding of open sets and continuous functions in second-order RM.  
We argue that the higher-order representation of open sets is closer to the mathematical mainstream, based in part on Section \ref{histo}.

\smallskip

Thirdly, we discuss related results in Section \ref{rest}, namely pertaining to `totality versus partiality' and closely related results connected to coding from \cite{samsplit}.

\subsection{The history of functions}\label{histo}
As discussed throughout the literature, RM studies \emph{ordinary mathematics}, which is generally qualified as:
\begin{quote}
that body of mathematics that is prior to or independent of the introduction of abstract set theoretic concepts. (\cite{simpson2}*{I.1})
\end{quote}
Now, the study of arbitrary -in particular discontinuous- functions clearly predates set theory.  
Indeed, the modern concept of `arbitrary' or `general' function is generally credited to Lobachevsky (\cite{loba}) and Dirichlet (\cite{didi3}) in 1834-1837.  
A detailed history may be found in \cite{beuler}*{\S13}, where Euler is credited (much earlier) for the general definition.   
Regardless of priority, Fourier's famous work (\cite{fourierlachaud}) on Fourier series gave great impetus to the (study of the) general definition, as follows.   
\begin{quote}
\emph{This is the first of its \textup{[=Fourier series]} services which I wish to emphasize, the development and complete clarification of the concept of a function.} (\cite{vlekje}*{p.\ 116}, emphasis in original)  
\end{quote}
Moreover, discontinuous functions had already enjoyed a rich history by the time set theory came to the fore. 
Indeed, Euler's use of pulse functions, which are zero except at one point, is discussed in \cite{beuler}*{p.\ 71}, while Dirichlet famously mentions the characteristic function of $\Q$ in \cite{didi1} around 1829; Riemann studied discontinuous functions in his 1854 \emph{Habilitationsschrift}, which propelled them in the mathematical mainstream,  according to Kleiner \cite{kleine}*{p.\ 115}.   
The 1870 dissertation of Hankel, a student of Riemann, has `discontinuous functions' in its title (\cite{hankelijkheid}).  

\smallskip

The previous is not meant to settle priority disputes, but rather (only) establish that arbitrary -in particular discontinuous- functions predate set theory `by a large margin'. 
From this point of view, the study of such functions squarely belong to ordinary mathematics.  

\smallskip

\emph{In our opinion}, the aforementoned study is best undertaken in a language that has third-order objects as first-class citizens, like higher-order arithmetic.  In Section~\ref{racey}, we provide additional arguments in favour of this view, based on the fundamental connections between second- and third-order objects; such connections do not really exist  for second-order representations.  In this way, the study of $\TIET^{\omega}$ etc.\ is not an afterthought, but the very bread and butter of RM.

\subsection{Coding open sets}\label{kannietcopen}
The coding of continuous functions in second-order RM is intrinsically linked to the coding of open sets in light of \cite{simpson2}*{II.7.1}.  
The latter essentially establishes that these two concepts are (effectively) interchangeable in the base theory $\RCA_{0}$.  
Hence, the question whether third-order functions are a more natural form of coding than second-order (codes for) functions {directly} translates
to open sets.  We discuss the coding of the latter in this section.  

\smallskip

First of all, open sets are represented in second-order RM as $\Sigma_{1}^{0}$-formulas
with an additional extensionality requirement (see \cite{simpson2}*{II.5.6-7}).  Intuitively, such
formulas represent countable unions of basic open intervals.  In turn, open sets in separable spaces can 
be represented via such unions, explaining the coding.   It goes without saying that the notion of $\Sigma_{1}^{0}$-formula is a construct from mathematical logic
The aforementioned \cite{simpson2}*{II.7.1} shows that open sets in second-order RM are (equivalently) represented by (second-order codes for) continuous characteristic functions.   

\smallskip

Secondly, we have previous studied open sets represented via third-order characteristic functions in \cite{dagsamVII, dagsamVI, dagsamX}.  
Characteristic (aka indicator) functions are used to represent sets in mathematical logic in e.g.\ \cite{kruisje, hunterphd}, but \emph{also in mainstream mathematics}: most textbooks develop the Lebesgue integral using (arbitrary) characteristic functions of sets (see \cite{taomes}*{p.\ xi} for an example), while Dirichlet already considered the characteristic function of $\Q$ in \cite{didi1} in 1829. 
We also mention \emph{Thomae's function}, similar to Dirichlet's function and introduced in \cite{thomeke}*{p.\ 14} around 1875.     

\smallskip

In light of the previous, it is not an exaggeration to claim that \emph{in the case of open sets}, the third-order representation of open sets via (arbitrary) characteristic functions is `more mainstream mathematics' than the representation in second-order RM via $\Sigma_{1}^{0}$-formulas or (second-order codes for) continuous characteristic functions.  
In particular, Section \ref{histo} dictates the study of arbitrary characteristic functions in RM.  

%
%
%
%

\subsection{Related results}\label{rest}
We discuss results related to the representations of objects in second- and third-order arithmetic.  
In light of the above results on $\RCP$, an important role is played by the partial versus total distinction, as discussed in Section~\ref{parto}.
We discuss closely related RM-results connected to coding in Section~\ref{darko}; these results were first published in \cite{samsplit}. 
\subsubsection{Partial versus total objects}\label{parto}
We discuss the use of partial and total objects in mathematics motivated by $\RCP$ and Corollary \ref{fledna}.  

\smallskip

First of all, as explained nicely in \cite{zweer}*{p.\ 10}, the study of computability based on Turing's framework (\cite{tur37}) is fundamentally based on \emph{partial} computable functions for the simple reason that 
those can be listed in a computable way.  By contrast, the total computable functions are susceptible to diagonalisation \emph{by their very nature}.  
Since second-order RM makes heavy use of results in (Turing) computability theory, it is not a surprise that partial functions (like codes in $\TIE^{2}$) play an important role.  

\smallskip

Secondly, Kleene's notion of computability based on S1-S9 (\cite{kleeneS1S9}) is based on total functionals, while the system $\RCAo$ is officially a type theory in which all functionals are total. 
In general, Martin-L\"of's intuitionistic type theory (see e.g.\ \cite{loefafsteken}) is similarly based on total objects, and the same for the associated proof assistants.  One can accommodate partial functions in type theory using certain advanced techniques (see e.g.\ \cite{bove}). 

\smallskip

Thus, mathematics and computer science boast frameworks based on partial objects and frameworks based on total objects.  
The choice of framework may therefore depend on personal goals and preconceptions.  As discussed in Section~\ref{trintro}, Tietze discusses
both total and partial functions, namely as in items (A) and (B). 

\smallskip

In conclusion, the partial versus total distinction does not (directly) provide arguments for or against a particular kind of coding.  
However, as discussed next, the choice of framework may be the cause of certain observed phenomena.  

\subsubsection{Results related to coding}\label{darko}
As discussed in Section \ref{parto}, mathematics and computer science boast frameworks based on partial objects and frameworks based on total objects.  
The choice of framework may therefore depend on personal goals and preconceptions. 
In particular, one is free to chose between the development of RM in second- or higher-order arithmetic.  

\smallskip

\emph{However}, one should keep in mind that certain observed phenomena are (only) an artifact of this choice.  
As an example, we discuss \emph{splittings} and \emph{disjunctions} in RM, as studied in \cite{samsplit}. 

\smallskip

As to splittings, there are some examples in RM of theorems $A, B, C$ such that $A\asa (B\wedge C)$, i.e.\ $A$ can be \emph{split} into two independent (fairly natural) parts $B$ and $C$.  
As to disjunctions, there are (very few) examples in RM of theorems $D, E, F$ such that $D\asa (E\vee F)$, i.e.\ $D$ can be written as the \emph{disjunction} of two independent (fairly natural) parts $E$ and $F$.  
By contrast, there is a plethora of (natural) splittings and disjunctions in {higher-order} RM, as shown in \cite{samsplit} and witnessed by Figure \ref{fagure} below. 
\begin{figure}[h]
\resizebox{\linewidth}{!}{%
\begin{tabular}{ |c|c|c| } 
 \hline
 $\MUC\asa [\WKL +(\kappa_{0}^{3})+\neg(\exists^{2})]$ & $(\exists^{3})\asa [(Z^{3})+ (\exists^{2})]$ &$(\kappa_{0}^{3})\asa [(Z^{3})+\FF]$ \\ 
  $\MUC\asa [\WKL +(\kappa_{0}^{3})+\neg(S^{2})]$&$(\exists^{3})\asa [(\kappa_{0}^{3})+(\exists^{2})]$ & $[(\kappa_{0}^{3})+\WKL]\asa  [(\exists^{3})\vee \MUC]$  \\
    $\MUC\asa [\WKL +(\kappa_{0}^{3})+\neg(\exists^{3})]$& $(\exists^{3})\asa [\FF+(Z^{3})+\neg\MUC] $& $\FF\asa [(\exists^{2})\vee \MUC]$  \\
    $\MUC\asa [\FF+ \neg(\exists^{2})]$ &  $(\exists^{2})\asa [\UATR\vee\neg\HBU]$ & $\FF\asa [(\exists^{2})\vee (\kappa_{0}^{3})] $\\
    $\MUC\asa [\FF+(Z^{3})+ \neg(S^{2})]$&$(\exists^{2})\asa [\FF+\neg\MUC]$& $(Z^{3})\asa [(\exists^{3})\vee \neg(\exists^{2})]$ \\
        $\MUC\asa [\FF+(Z^{3}) +\neg(\exists^{3})]$&$ \WKL\asa [(\exists^{2})\vee \HBU] $ &$(Z^{3})\asa [(\exists^{3})\vee \neg\FF\vee \MUC]$\\
  $\T_{1}\asa [ \T_{0}\vee \Sigma_{2}^{0}\textup{\textsf{-IND}}]$      &$\WWKL\asa [(\exists^{2})\vee \WHBU]$& $\LIN\asa [\HBU \vee \neg\WKL]$ \\
        \hline
\end{tabular}
}
\caption{Summary of (some of) the results in \cite{samsplit}}\label{fagure}
\end{figure}\\
We refer to \cite{samsplit}*{\S2} for the relevant definitions not found in Appendix \ref{prelim2}. 
Some results in Figure \ref{fagure} are proved in extensions of $\RCAo$.

\smallskip

In a nutshell, splittings and disjunctions are rare in second-order RM, but rather common in higher-order arithmetic.  
As discussed at length in \cite{samsplit}*{\S6} and suggested by Figure \ref{fagure}, the splittings and disjunctions in Figure \ref{fagure} are all directly based on the axiom $(\exists^{2})$ and/or the associated `excluded middle trick'.  

\smallskip

In conclusion, as discussed at the end of Section \ref{sepclo}, the use of third-order functionals in $\TIET^{\omega}$ constitutes an implicit constructive enrichment, based on $(\exists^{2})$ and the excluded middle trick.
However, the latter are also responsible for Figure~\ref{fagure}, and more generally most results in \cite{samsplit}.

\section{Foundational implications}
We discuss the foundational implications of the above results.  
In Section \ref{FRM}, we investigate how our results affect the foundationals claims made in RM, especially pertaining to \emph{predicativist mathematics} and \emph{Hilbert's program for the foundations of mathematics}.
In Section \ref{XMX}, we speculate on \emph{why} the theorems in Figure~\ref{kk} behave as they do.  We also formulate a general criterion for judging coding.

\subsection{Lines in the sand}\label{FRM}
Certain results in RM are viewed as contributions to foundational programs by Hilbert and Russell-Weyl.  
We critically examine such claims in this section.  In a nuthshell, these programs draw a line in the sand in that 
they identify a certain threshold of logical strength above which the associated mathematics is somehow no longer meaningful or acceptable. 
Our above results are very relevant as we have shown how certain theorems can \emph{drop} in strength, even below the aforementioned thresholds as it turns out.  

\subsubsection{Hilbert's program for the foundations of mathematics}
We discuss \emph{Hilbert's program for the foundations of mathematics}, described in \cite{hilbertendlich}, an outgrowth of Hilbert's \emph{second problem} from his famous list of 23 open problems \cite{hilbertlist}.
The aim of this program was to establish the consistency of mathematics using only so-called finitistic methods.  
Tait has argued that Hilbert's finitistic mathematics is captured by the formal system $\PRA$ in \cite{tait1}.

\smallskip

Now, G\"odel's \emph{incompleteness theorems} establish that any logical system that can accommodate arithmetic, cannot even prove its own consistency \cite{godel3}.  
Thus, it is generally\footnote{By contrast, Detlefsen \cites{det1, det2} and Artemov \cite{sartemov} argue at length why G\"odel's results do not show that Hilbert's program is impossible.} believed that G\"odel's results show that Hilbert's program is impossible.  According to Simpson \cite{simpson2}*{IX.3.18}, a \emph{partial} realisation of Hilbert's program is however possible as follows: a certain theorem $T$ is called \emph{finitistically reducible} if $T$ is provable in a $\Pi_{2}^{0}$-conservative extension of $\PRA$.  The most prominent of the latter systems is the Big Five system $\WKL_{0}$.  
The intuitive idea is that while $T$ may deal with infinitary objects (and is therefore not finitistic), $T$ does not yield any (new) $\Pi_{2}^{0}$-sentences that $\PRA$ cannot prove. 
We note that Burgess criticises the above in \cite{burgess}, but we will go along with Simpson.

\smallskip

In light of the previous, the `line in the sand' is as follows: the $\Pi_{2}^{0}$-sentences provable in $\PRA$ are finitistic, and any theorem $T$ that does not prove any new $\Pi_{2}^{0}$-sentences may be called `reducible to finitism'.  
All other theorems are `not finitistically reducible'.  
Our results pose a problem as follows: the Tietze, Heine, and Weierstrass theorems for separably closed are \emph{not} finitistically reducible when formulated with second-order codes (Theorem \ref{klanty}), while these theorems drop to finitistically 
reducible when formulated in third-order arithmetic (Theorems~\ref{WLS} and~\ref{klanty2}).  
In our opinion, the foundational status of a theorem, i.e.\ finitistically reducible or not, should not depend on technical details like coding.    

\smallskip

%

\subsubsection{Predicativist mathematics}
We discuss the development of \emph{predicativst mathematics} due to Russell,  Weyl, and Ferferman \cites{weyldas, fefermanga, rukker}.
In a nutshell, motivated by contradictions in naive set theory, predicativist mathematics also draws a line in the sand, namely at the level\footnote{As in the previous section, some people disagree and claim that predicativist mathematics can extend beyond $\ATR_{0}$ \cite{weefgetouwtrek}.} of $\ATR_{0}$.  Our results from Section \ref{cook} therefore 
yield a `drop' similar to the previous section, namely from the level of $\FIVE$ (above $\ATR_{0}$) to level of $\ACA_{0}$ (below $\ATR_{0}$). 

\smallskip

First of all, \emph{Russell's paradox} shows that naive set theory is inconsistent, the problematic entity being the `set of all sets'.  
The root cause of this paradox, according to Russell, is \emph{implicit definition}: the `set of all sets' is defined as the collection 
of all sets \emph{including itself} \cite{rukker}.  Thus, to define the `set of all sets', one implicitly assumes that it exists already, a kind of \emph{vicious circle}.  

\smallskip

According to Russell, one should in general avoid such implicit definitions and vicious circles, which are also called \emph{impredicative}, or \emph{non-predicative} in \cite{rukker}, leading to the term \emph{predicativist} or \emph{predicative} for 
that mathematics that makes no use of impredicative notions.
Weyl's \emph{Das Kontinuum} is a milestone in the development of mathematics on predicative grounds \cite{weyldas}, while the main protagonist of the modern development is Feferman \cite{fefermanlight}.  

\smallskip

The upper limit of predicativist mathematics was identified independently by Feferman and Sch\"utte as being the ordinal $\Gamma_{0}$, which is also the proof-theoretic ordinal of the Big Five system $\ATR_{0}$ \cites{simpson2, fefermanmain}.  Similar to the previous section, systems like $\ACAo$ may involve impredicative notions, but since its ordinal is (well) below $\Gamma_{0}$, the former system is called \emph{predicatively reducible}.

\smallskip

Having found the line drawn in the sand by predicativist mathematics, we note that our results pose a problem: 
the Ekeland variational principle for $\N^{\N}$ is \emph{not} predciatively reducible when formulated with second-order codes (Theorem \ref{hogo}), 
while this theorems drop to predicatively reducible when formulated in third-order arithmetic (Theorem~\ref{nogo2}).  
We repeat that, in our opinion, the foundational status of a theorem, i.e.\ predicatively reducible or not, should not depend on technical details like coding.

\subsection{Cause and effect}\label{XMX}
In this section, we speculate on \emph{why} the theorems in Figure~\ref{kk} behave as they do.  
We first discuss Kohlenbach's result on coding in Section \ref{koko}, which yields 
a satisfactory answer to the aforementioned question `by contrast'.  
As suggested above, this understanding should ideally lead to a distinction between `good' and `bad' codes, as e.g.\ the coding of real numbers does not seem to lead to results as in Figure \ref{kk}.  
We discuss the `good' versus `bad' distinction for codes in Section \ref{racey}.
\subsubsection{Coding comes to a head}\label{koko}
We sketch the `coding' results from \cite{kohlenbach4}*{\S4} and identify the cause of the behaviour of the theorems in Figure \ref{kk} in contrast to Kohlenbach's results.

\smallskip

Kohlenbach proves in \cite{kohlenbach4}*{\S4} that for continuous type two functionals, the existence of a RM-code is equivalent to the existence of a continuous modulus of continuity.  
Thus, RM-codes constitute a constructive enrichment as follows: when interpreted in higher-order arithmetic, a second-order theorem about RM-continuous functions only seems to apply to higher-order functionals \emph{that also come with an RM-code \(or the aforementioned modulus\)}.  In particular, it is not clear whether such a theorem applies to \emph{all} continuous functionals: `out of the box' it only seems to apply to the sub-class `continuous functionals with a continuous modulus'.  
To remove (part of) this uncertainty, note that $\WKL$ suffices to show that continuous functionals have an RM-code on $2^{\N}$ \cite{kohlenbach4}*{\S4}, i.e.\ the RM of $\WKL$ does not change if we replace any leading universal quantifier over RM-codes by a quantifier over higher-order functionals that are continuous in the relevant theorems.    

\smallskip

The observation in the previous paragraph gives rise to the concept of \emph{coding overhead} of a given theorem $T$, which is the minimal axioms(s) needed to prove (over $\RCAo$) that $T$ formulated \emph{with second-order codes} implies $T$ formulated \emph{in third-order arithmetic}, whenever the former seems less general than the latter.  However, coding overhead is not all there is, much to our own surprise, as follows.  

\smallskip

The problem with the theorems in Figure \ref{kk} turns out to be \emph{the opposite} of the direction indicated by the concept of coding overhead: our results for e.g.\ the Tietze extension 
theorem imply that, working again in Kohlenbach's base theory, the class of `second-order codes defined on a separably closed set $C$' is much \emph{bigger} than `third-order functions continuous on $C$'. 
In this way, statements like $\RCP$ connecting the aforementioned classes are at the level of $\ACA_{0}$, while the associated version of Tietze's extension theorem $\TIET^{\omega}$ is provable in $\RCAo$.
In particular, Theorem \ref{winkel} suggests we also define the notion of \emph{coding underhead} of a theorem $T$, which is the minimal axioms(s) needed to prove (over $\RCAo$) that $T$ formulated \emph{in third-order arithmetic} implies $T$ formulated \emph{with second-order codes}, and this whenever the former seems less general than the latter, like in the case of separably closed sets. 

\smallskip

In conclusion, past results on coding have focused on the constructive enrichment provided by codes, as this causes the class of represented objects to be (potentially) smaller than the class of actual objects.
By contrast, the second-order Tietze, Weierstrass, and Heine theorems in Figure \ref{kk} pertain to a class of codes that is \emph{larger} than the class of third-order objects (working in $\RCAo$).  
This `opposite' difference in size is the cause of the behaviour observed in Figure \ref{kk}.  

%
%

\subsubsection{Beyond good and bad coding}\label{racey}
Having identified the cause of the behaviour of the theorems in Figure \ref{kk}, we now expand our point of view and attempt 
to provide a criterion by which coding can be judged as `good' or `bad', in light of the Main Question of RM.  We reiterate that we do not 
judge the coding practise of RM as good or bad.  We only wish to establish a criterion that avoids the `bad' behaviour of coding as summarised 
in Figure \ref{kk}, assuming one judges the latter as such.

\smallskip

Firstly, we consider the following -in our opinion- uncontroversial statement.
\begin{quote}
Codes for a certain class of objects are meant to capture/represent/reflect, as well as possible, the original class of objects.  
\end{quote}
Simpson's quotes on `theorems as they stand' from Section \ref{intro} support this statement, while the following quote expresses the same idea.
\begin{quote}  
There is a key criterion for choice of coding that is implicitly used: that $\RCA_{0}$
should prove as much as possible. E.g., if real numbers are Cauchy sequences of
rationals, we can't prove in $\RCA_{0}$ that every real number is the limit of a sequence
of rationals with arbitrarily fast convergence. \cite{fried5}*{p.\ 134}
\end{quote}
Indeed, this criterion is used to choose the particular coding of real numbers used in RM, namely fast-converging Cauchy sequences as in \cite{simpson2}*{II.4}.  
By contrast, other possible representations of real numbers (like Dedekind cuts) are rejected because $\RCA_{0}$ cannot prove basic properties about the representations.  

\smallskip

The problem with the previous is that one does not \emph{fully} take the role 
of the base theory into account.  Indeed, in our opinion, the following addition is crucial.  
\begin{quote}
Codes for a certain class of objects are meant to capture/represent/reflect, as well as possible, the original class of objects \emph{also in non-classical extensions of the base theory}.
\end{quote}
Our rationale is that the base theories $\RCA_{0}$ and $\RCAo$ are weak enough to be consistent with non-classical axioms like e.g.\ the axiom \textsf{CT} which expresses that every $\N\di \N$-function is computable (in the sense of Turing). 
The axiom \textsf{CT} is called \emph{Church's thesis} in constructive mathematics \cite{beeson1}.   Also found in the latter is `Brouwer's principle' \textsf{BP}, which expresses that all functions on $\N^{\N}$ are continuous; \textsf{BP} 
yields a conservative extension of $\RCAo$, while the associated \emph{intuitionistic fan functional} $\MUC$ from \cite{kohlenbach2}*{\S3} yields a conservative extension of $\WKL_{0}$.
Ishihara connects \textsf{BP} to $\neg(\exists^{2})$, where the latter is formulated with $\Pi_{1}^{0}$-formulas in \cite{goodforishi}.

\smallskip

As it happens, both $\textsf{CT}$ and $\textsf{BP}$ imply the \emph{higher-order} versions of the Tietze theorem from Figure \ref{kk}. 
Similarly, the higher-order Heine and Weierstrass theorems from Figure \ref{kk} follow from $\MUC$ or $\WKL+\textsf{BP}$.
This behaviour is obviously \emph{not} reflected in the associated \emph{second-order} versions form Figure \ref{kk}, as e.g.\ \textsf{CT} and $\ACA_{0}$ are inconsistent.  
It should be noted that non-classical axioms like $\neg \WKL_{0}$ have been studied in classical RM in \cite{boulanger}. 

\smallskip

Another way of looking things based on Corollary \ref{fledna} is as follows.  One one hand, $\TIET^{\omega}$ follows from $\CT$, while on the other hand $\RCP$ is inconsistent with $\CT$. 
This suggests that $\TIET^{\omega}$ is excluding the genuinely `difficult' instances of $\TIET^{2}$, which contributes to the latter being a stronger principle.  

\smallskip

\noindent The main point of the previous observations is now two-fold, as follows.

\smallskip

On one hand, there are (very) different models of weak systems, which is reflected in the syntax by $(\exists^{2})\vee \neg(\exists^{2})$ being an axiom of $\RCAo$.
Depending on which case we are working with, the class of (continuous) functions looks very different: in case $\neg(\exists)$, all functions on $\R$ are continuous, similar to Brouwer's principle $\textsf{BP}$, while
in case $(\exists^{2})$, discontinuous functions are directly available.   
\emph{By contrast}, whether we work in the case $(\exists^{2})$ or $\neg(\exists^{2})$ has less 
 of an effect (or even none) on the class of codes for (continuous) functions: in either case there is a code for a discontinuous function, namely as as a sequence of functions that converges to $\exists^{2}$ in $\RCAo$.

\smallskip

On the other hand, there is a deep connection between second- and third-order objects: $\CT$ is a statement about $\N\di \N$-functions, but nonetheless 
causes all $\N^{\N}\di \N$-functions to be continuous.  Similarly, $(\exists^{2})$ is equivalent to the existence of a discontinuous function on $\R$, and the former 
allows us to solve the Halting problem, which contradicts $\CT$.  \emph{By contrast}, the connection between second-order objects and \emph{representations} of third-order objects is 
much weaker.  Indeed, in either case of $\CT \vee \neg\CT$ there is a code for a discontinuous function.  

\smallskip

In light of the previous, we can say that (i) the class of codes for (continuous) functions is `more static' than the class of (continuous) functions itself and (ii) the class of codes for (continuous) functions is `more disconnected' from the class of second-order objects than the class of (continuous) functions itself.  

\smallskip

In conclusion, we may call a coding `good' (meaning it avoids results as in Figure~\ref{kk}) if it satisfies the previous centred statement, i.e.\ if the correspondence between objects and representations 
is also valid in non-classical extensions of the base theory.  

\subsection{White horses and horses that are white}
We discuss possible future research topics on coding via existing examples. 

\smallskip

Firstly, while it is hard to argue with the statement 
\begin{center}
\emph{a countable set is a set that is countable}, 
\end{center}
it should be noted that a countable set in $\L_{2}$ is actually a sequence by \cite{simpson2}*{V.4.2}.  The latter definition can be said to constitute a constructive enrichment compared to the usual definition based on injections or bijections.    
Of course, the usual definition of countable set (for subsets of Baire space), 
is fundamentally third-order and cannot even be expressed in $\L_{2}$.  Nonetheless, it is an interesting question what happens to e.g.\
theorems from RM if we work in higher-order RM and use the higher-order definition of countable set.  This question becomes all the more pertinent in light of the observation that textbooks tend to treat countable sets as follows:  to \emph{prove that} a given set is countable, it is generally only shown that there is an injection to $\N$, while to \emph{prove something about} countable sets, one additionally assumes the latter are given by a sequence. 

\smallskip

Secondly, a partial answer to the previous question has been published in \cite{samrecount, samflo2}.  In particular, an interesting `coding' result is obtained, as follows.
Now, the following $\L_{2}$-sentence is equivalent to $\WKL_{0}$ by \cite{simpson2}*{IV.4.5}.
\begin{princ}[$\textsf{ORD}^{2}$]
Any formally real countable field is orderable.
\end{princ}
Let $\textsf{ORD}^{3}$ be the previous principle formulated in higher-order arithmetic \emph{for \textbf{\textup{countable}} subsets of Baire space}, where \textbf{countable} is interpreted via the usual definition involving injections to $\N$, as can be found in Kunen's textbook \cite{kunen}.
Note that this is the only modification to $\ORD^{2}$.
Then $\ORD^{3}$ implies the Heine-Borel theorem for \emph{uncountable covers}, which is not provable in $\SIXK$ for any $k$.

\smallskip

Thus, the definition of `countable' used in RM also involves constructive enrichments in that the usual definition of `countable' gives rise to a wholly different beast.  
These results are not as refined or fundamental as the ones in Figure \ref{kk}.

\smallskip

Thirdly, the \emph{closed graph theorem} is provable in an extension $\RCA_{0}^{+}$ of $\RCA_{0}$ \cite{browsi}*{\S5}.
The former theorem expresses that a linear operator with a graph \emph{given by a separably closed set} must be continuous.  
Since $\ACA_{0}$ proves $\RCA_{0}^{+}$, the usual `excluded middle trick' yields a higher-order version of the closed graph theorem \emph{inside $\RCAo$}.

\smallskip

Fourth, Dag Normann and the author have obtained a lot of results about the RM of countable sets which can be found in the preprints \cites{dagsamX, samNEO2, samcount}.  
Perhaps the most interesting example is as follows.  The following two principles are equivalent over $\RCAo$, where `countable set' is defined as above, i.e.\ based on injections to $\N$.
\begin{princ}[$\cocode_{0}$]
For any non-empty countable set $A\subseteq [0,1]$, there is a sequence $(x_{n})_{n\in \N}$ in $A$ such that $(\forall x\in \R)(x\in A\asa (\exists n\in \N)(x_{n}=_{\R}x))$.
\end{princ}
\begin{princ}[$\BW_{0}^{C}$]\label{comp}
For any countable $A\subset 2^{\N}$ and $F:2^{\N}\di 2^{\N}$, the supremum $\sup_{f\in A}F(f)$ exists. 
\end{princ}
Many similar equivalences can be found in \cite{samcount}.
Moreover, combining $\BW_{0}^{C}$ with the Suslin functional, i.e.\ higher-order $\FIVE$, one obtains $\SIX$.  
The same holds at the computational level, as shown in \cite{dagsamX}.  The system $\SIX$ has been called
the current `upper limit' for RM, previously only reachable via topology (see \cites{mummymf, mummyphd, mummy}).  
Moreover, according to Rathjen \cite{rathjenICM}*{\S3}, the strength of $\SIX$ \emph{dwarfs} that of $\FIVE$.  

\begin{ack}\rm
Our research was supported by the John Templeton Foundation via the grant \emph{a new dawn of intuitionism} with ID 60842.
We express our gratitude towards this institution. 
We thank Anil Nerode, Paul Shafer, and Keita Yokoyama for their valuable advice.  
We also thank the anonymous referee for the many helpful suggestions, especially for suggesting the need for Section~\ref{REF}.  
Opinions expressed in this paper do not necessarily reflect those of the John Templeton Foundation.    
\end{ack}

\appendix
\section{Reverse Mathematics}\label{janarn}

We introduce \emph{Reverse Mathematics} in Section \ref{prelim1}, as well as its generalisation to \emph{higher-order arithmetic}, and the associated base theory $\RCAo$ of Kohlenbach's \emph{higher-order} RM.  
We introduce some essential axioms in Section~\ref{prelim2}.

\subsection{Reverse Mathematics}\label{prelim1}
Reverse Mathematics is a program in the foundations of mathematics initiated around 1975 by Friedman \cites{fried,fried2} and developed extensively by Simpson \cite{simpson2}.  
The aim of RM is to identify the minimal axioms needed to prove theorems of ordinary, i.e.\ non-set theoretical, mathematics.   In almost all cases, these minimal axioms are also \emph{equivalent} to the theorem at hand (over a weak logical system).  The derivation of the minimal axioms from the theorem is the `reverse' way of doing mathematics, lending the subject its name. 

\smallskip

We refer to \cite{stillebron} for an introduction to RM and to \cite{simpson2, simpson1} for an overview of RM.  We expect basic familiarity with RM, but do sketch some aspects of Kohlenbach's \emph{higher-order} RM \cite{kohlenbach2}`'essential to this paper, including the base theory $\RCAo$ (Definition \ref{kase}).  

\smallskip

First of all, in contrast to `classical' RM based on \emph{second-order arithmetic} $\Z_{2}$, higher-order RM uses $\L_{\omega}$, the richer language of \emph{higher-order arithmetic}.  
Indeed, while $\Z_{2}$ is restricted to natural numbers and sets of natural numbers, higher-order arithmetic can accommodate sets of sets of natural numbers, sets of sets of sets of natural numbers, et cetera.  
To formalise this idea, we introduce the collection of \emph{all finite types} $\mathbf{T}$, defined by the two clauses:
\begin{center}
(i) $0\in \mathbf{T}$   and   (ii)  if $\sigma, \tau\in \mathbf{T}$ then $( \sigma \di \tau) \in \mathbf{T}$,
\end{center}
where $0$ is the type of natural numbers, and $\sigma\di \tau$ is the type of mappings from objects of type $\sigma$ to objects of type $\tau$.
In this way, $1\equiv 0\di 0$ is the type of functions from numbers to numbers, and where  $n+1\equiv n\di 0$.  Viewing sets as given by characteristic functions, we note that $\Z_{2}$ only includes objects of type $0$ and $1$.    

\smallskip

Secondly, the language $\L_{\omega}$ includes variables $x^{\rho}, y^{\rho}, z^{\rho},\dots$ of any finite type $\rho\in \mathbf{T}$.  Types may be omitted when they can be inferred from context.  
The constants of $\L_{\omega}$ include the type $0$ objects $0, 1$ and $ <_{0}, +_{0}, \times_{0},=_{0}$  which are intended to have their usual meaning as operations on $\N$.
Equality at higher types is defined in terms of `$=_{0}$' as follows: for any objects $x^{\tau}, y^{\tau}$, we have
\be\label{aparth}
[x=_{\tau}y] \equiv (\forall z_{1}^{\tau_{1}}\dots z_{k}^{\tau_{k}})[xz_{1}\dots z_{k}=_{0}yz_{1}\dots z_{k}],
\ee
if the type $\tau$ is composed as $\tau\equiv(\tau_{1}\di \dots\di \tau_{k}\di 0)$.  
Furthermore, $\L_{\omega}$ also includes the \emph{recursor constant} $\mathbf{R}_{\sigma}$ for any $\sigma\in \mathbf{T}$, which allows for iteration on type $\sigma$-objects as in the special case \eqref{special}.  Formulas and terms are defined as usual.  
One obtains the sub-language $\L_{n+2}$ by restricting the above type formation rule to produce only type $n+1$ objects (and related types of similar complexity).        
\bdefi\label{kase} 
The base theory $\RCAo$ consists of the following axioms.
\begin{enumerate}
 \renewcommand{\theenumi}{\alph{enumi}}
\item  Basic axioms expressing that $0, 1, <_{0}, +_{0}, \times_{0}$ form an ordered semi-ring with equality $=_{0}$.
\item Basic axioms defining the well-known $\Pi$ and $\Sigma$ combinators (aka $K$ and $S$ in \cite{avi2}), which allow for the definition of \emph{$\lambda$-abstraction}. 
\item The defining axiom of the recursor constant $\mathbf{R}_{0}$: For $m^{0}$ and $f^{1}$: 
\be\label{special}
\mathbf{R}_{0}(f, m, 0):= m \textup{ and } \mathbf{R}_{0}(f, m, n+1):= f(n, \mathbf{R}_{0}(f, m, n)).
\ee
\item The \emph{axiom of extensionality}: for all $\rho, \tau\in \mathbf{T}$, we have:
\be\label{EXT}\tag{$\textsf{\textup{E}}_{\rho, \tau}$}  
(\forall  x^{\rho},y^{\rho}, \varphi^{\rho\di \tau}) \big[x=_{\rho} y \di \varphi(x)=_{\tau}\varphi(y)   \big].
\ee 
\item The induction axiom for quantifier-free\footnote{To be absolutely clear, variables (of any finite type) are allowed in quantifier-free formulas of the language $\L_{\omega}$: only quantifiers are banned.} formulas of $\L_{\omega}$.
\item $\QFAC^{1,0}$: The quantifier-free Axiom of Choice as in Definition \ref{QFAC}.
\end{enumerate}
\edefi
\bdefi\label{QFAC} The axiom $\QFAC$ consists of the following for all $\sigma, \tau \in \textbf{T}$:
\be\tag{$\QFAC^{\sigma,\tau}$}
(\forall x^{\sigma})(\exists y^{\tau})A(x, y)\di (\exists Y^{\sigma\di \tau})(\forall x^{\sigma})A(x, Y(x)),
\ee
for any quantifier-free formula $A$ in the language of $\L_{\omega}$.
\edefi
%
%
As discussed in \cite{kohlenbach2}*{\S2}, $\RCAo$ and $\RCA_{0}$ prove the same sentences `up to language' as the latter is set-based and the former function-based.  Recursion as in \eqref{special} is called \emph{primitive recursion}; the class of functionals obtained from $\mathbf{R}_{\rho}$ for all $\rho \in \mathbf{T}$ is called \emph{G\"odel's system $T$} of all (higher-order) primitive recursive functionals.  

\smallskip

We use the usual notations for natural, rational, and real numbers, and the associated functions, as introduced in \cite{kohlenbach2}*{p.\ 288-289}.  
\begin{defi}[Real numbers and related notions in $\RCAo$]\label{keepintireal}\rm~
\begin{enumerate}
 \renewcommand{\theenumi}{\alph{enumi}}
\item Natural numbers correspond to type zero objects, and we use `$n^{0}$' and `$n\in \N$' interchangeably.  Rational numbers are defined as signed quotients of natural numbers, and `$q\in \Q$' and `$<_{\Q}$' have their usual meaning.    
\item Real numbers are coded by fast-converging Cauchy sequences $q_{(\cdot)}:\N\di \Q$, i.e.\  such that $(\forall n^{0}, i^{0})(|q_{n}-q_{n+i}|<_{\Q} \frac{1}{2^{n}})$.  
We use Kohlenbach's `hat function' from \cite{kohlenbach2}*{p.\ 289} to guarantee that every $q^{1}$ defines a real number.  
\item We write `$x\in \R$' to express that $x^{1}:=(q^{1}_{(\cdot)})$ represents a real as in the previous item and write $[x](k):=q_{k}$ for the $k$-th approximation of $x$.    
\item Two reals $x, y$ represented by $q_{(\cdot)}$ and $r_{(\cdot)}$ are \emph{equal}, denoted $x=_{\R}y$, if $(\forall n^{0})(|q_{n}-r_{n}|\leq {2^{-n+1}})$. Inequality `$<_{\R}$' is defined similarly.  
We sometimes omit the subscript `$\R$' if it is clear from context.           
\item Functions $F:\R\di \R$ are represented by $\Phi^{1\di 1}$ mapping equal reals to equal reals, i.e.\ satisfying $(\forall x , y\in \R)(x=_{\R}y\di \Phi(x)=_{\R}\Phi(y))$.\label{EXTEN}
\item The relation `$x\leq_{\tau}y$' is defined as in \eqref{aparth} but with `$\leq_{0}$' instead of `$=_{0}$'.  Binary sequences are denoted `$f^{1}, g^{1}\leq_{1}1$', but also `$f,g\in C$' or `$f, g\in 2^{\N}$'.  Elements of Baire space are given by $f^{1}, g^{1}$, but also denoted `$f, g\in \N^{\N}$'.
\item For a binary sequence $f^{1}$, the associated real in $[0,1]$ is $\r(f):=\sum_{n=0}^{\infty}\frac{f(n)}{2^{n+1}}$.\label{detrippe}
\item Sets of type $\rho$ objects $X^{\rho\di 0}, Y^{\rho\di 0}, \dots$ are given by their characteristic functions $F^{\rho\di 0}_{X}\leq_{\rho\di 0}1$, i.e.\ we write `$x\in X$' for $ F_{X}(x)=_{0}1$. \label{koer} 
\end{enumerate}
\end{defi}
\noindent
Next, we mention the highly useful $\ECF$-interpretation. 
\begin{rem}[The $\ECF$-interpretation]\label{ECF}\rm
The (rather) technical definition of $\ECF$ may be found in \cite{troelstra1}*{p.\ 138, \S2.6}.
Intuitively, the $\ECF$-interpretation $[A]_{\ECF}$ of a formula $A\in \L_{\omega}$ is just $A$ with all variables 
of type two and higher replaced by countable representations of continuous functionals.  Such representations are also (equivalently) called `associates' or `RM-codes' \cite{kohlenbach4}*{\S4}. 
The $\ECF$-interpretation connects $\RCAo$ and $\RCA_{0}$ \cite{kohlenbach2}*{Prop.\ 3.1} in that if $\RCAo$ proves $A$, then $\RCA_{0}$ proves $[A]_{\ECF}$, again `up to language', as $\RCA_{0}$ is 
formulated using sets, and $[A]_{\ECF}$ is formulated using types, namely only using type zero and one objects.  
\end{rem}
In light of the widespread use of codes in RM and the common practise of identifying codes with the objects being coded, it is no exaggeration to refer to $\ECF$ as the \emph{canonical} embedding of higher-order into second-order RM.  
%
%
%

\subsection{Some axioms of higher-order RM}\label{prelim2}
We introduce some functionals which constitute the counterparts of some of the Big Five systems, in higher-order RM.
We use the formulation from \cite{kohlenbach2, dagsamIII}.  
First of all, $\ACA_{0}$ is readily derived from:
\begin{align}\label{mu}\tag{$\mu^{2}$}
(\exists \mu^{2})(\forall f^{1})\big[ (\exists n)(f(n)=0) \di [(f(\mu(f))=0)&\wedge (\forall i<\mu(f))f(i)\ne 0 ]\\
& \wedge [ (\forall n)(f(n)\ne0)\di   \mu(f)=0]    \big], \notag
\end{align}
and $\ACA_{0}^{\omega}\equiv\RCAo+(\mu^{2})$ proves the same sentences as $\ACA_{0}$ by \cite{hunterphd}*{Theorem~2.5}.   The (unique) functional $\mu^{2}$ in $(\mu^{2})$ is also called \emph{Feferman's $\mu$} \cite{avi2}, 
and is clearly \emph{discontinuous} at $f=_{1}11\dots$; in fact, $(\mu^{2})$ is equivalent to the existence of $F:\R\di\R$ such that $F(x)=1$ if $x>_{\R}0$, and $0$ otherwise \cite{kohlenbach2}*{\S3}, and to 
\be\label{muk}\tag{$\exists^{2}$}
(\exists \varphi^{2}\leq_{2}1)(\forall f^{1})\big[(\exists n)(f(n)=0) \asa \varphi(f)=0    \big]. 
\ee
\noindent
Secondly, $\FIVE$ is readily derived from the following sentence:
\be\tag{$\SS^{2}$}
(\exists\SS^{2}\leq_{2}1)(\forall f^{1})\big[  (\exists g^{1})(\forall n^{0})(f(\overline{g}n)=0)\asa \SS(f)=0  \big], 
\ee
and $\FIVE^{\omega}\equiv \RCAo+(\SS^{2})$ proves the same $\Pi_{3}^{1}$-sentences as $\FIVE$ by \cite{yamayamaharehare}*{Theorem 2.2}.   The (unique) functional $\SS^{2}$ in $(\SS^{2})$ is also called \emph{the Suslin functional} \cite{kohlenbach2}.
By definition, the Suslin functional $\SS^{2}$ can decide whether a $\Sigma_{1}^{1}$-formula as in the left-hand side of $(\SS^{2})$ is true or false.   We similarly define the functional $\SS_{k}^{2}$ which decides the truth or falsity of $\Sigma_{k}^{1}$-formulas; we also define 
the system $\SIXK$ as $\RCAo+(\SS_{k}^{2})$, where  $(\SS_{k}^{2})$ expresses that $\SS_{k}^{2}$ exists.  Note that we allow formulas with \emph{function} parameters, but \textbf{not} \emph{functionals} here.
In fact, Gandy's \emph{Superjump} \cite{supergandy} constitutes a way of extending $\FIVE^{\omega}$ to parameters of type two.  We identify the functionals $\exists^{2}$ and $\SS_{0}^{2}$ and the systems $\ACAo$ and $\SIXK$ for $k=0$.

\smallskip

\noindent
Thirdly, full second-order arithmetic $\Z_{2}$ is readily derived from $\cup_{k}\SIXK$, or from:
\be\tag{$\exists^{3}$}
(\exists E^{3}\leq_{3}1)(\forall Y^{2})\big[  (\exists f^{1})Y(f)=0\asa E(Y)=0  \big], 
\ee
and we therefore define $\Z_{2}^{\Omega}\equiv \RCAo+(\exists^{3})$ and $\Z_{2}^\omega\equiv \cup_{k}\SIXK$, which are conservative over $\Z_{2}$ by \cite{hunterphd}*{Cor.\ 2.6}. 
Despite this close connection, $\Z_{2}^{\omega}$ and $\Z_{2}^{\Omega}$ can behave quite differently, as discussed in e.g.\ \cite{dagsamIII}*{\S2.2}.   The functional from $(\exists^{3})$ is also called `$\exists^{3}$', and we use the same convention for other functionals. 
 Note that $(\exists^{3})\asa [(\exists^{2})+(\kappa_{0}^{3})]$ as shown in \cite{samsplit,dagsam}, where the latter is comprehension on $2^{\N}$: 
\be\tag{$\kappa_{0}^{3}$}
(\exists \kappa_{0}^{3}\leq_{3}1)(\forall Y^{2})\big[\kappa_{0}(Y)=0\asa (\exists f\in C)Y(f)=0  \big].
\ee  
Other `splittings' are studied in \cite{samsplit}, including $(\kappa_{0}^{3})$.

\begin{bibdiv}
\begin{biblist}
\bibselect{allkeida2}
\end{biblist}
\end{bibdiv}

\bye